\let\mathcal\mathscr
\numberwithin{equation}{section}
\newtheorem{theorem}{Theorem}[section]
\newtheorem{lemma}[theorem]{Lemma}
\theoremstyle{definition}
\newtheorem*{ack}{Acknowledgements}
\renewcommand{\d}{\mathrm{d}}
\renewcommand{\phi}{\varphi}
\newcommand{\0}{\mathbf{0}}
\newcommand{\PP}{\mathbb{P}}
\newcommand{\ZZ}{\mathbb{Z}}
\newcommand{\NN}{\mathbb{N}}
\newcommand{\QQ}{\mathbb{Q}}
\newcommand{\RR}{\mathbb{R}}
\newcommand{\CC}{\mathbb{C}}
\newcommand{\cQ}{\mathcal{Q}}
\newcommand{\cD}{\mathcal{D}}
\newcommand{\cM}{\mathcal{M}}
\renewcommand{\leq}{\leqslant}
\renewcommand{\geq}{\geqslant}
\renewcommand{\bar}{\overline}
\newcommand{\ma}{\mathbf}
\newcommand{\m}{\mathbf{m}}
\newcommand{\x}{\mathbf{x}}
\newcommand{\y}{\mathbf{y}}
\renewcommand{\b}{\mathbf{b}}
\renewcommand{\a}{\mathbf{a}}
\renewcommand{\k}{\mathbf{k}}
\newcommand{\al}{\alpha}
\newcommand{\be}{\beta}
\renewcommand{\rho}{\varrho}
\newcommand{\ve}{\varepsilon}
\DeclareMathOperator{\supp}{supp}
\DeclareMathOperator{\Mod}{mod} 
\renewcommand{\bmod}[1]{\,(\Mod{#1})}
\begin{document}

\title[Pairs of diagonal quadratic forms]{Pairs of diagonal quadratic forms and linear correlations among sums of two squares}

\author{T.D.\ Browning}
\address{School of Mathematics\\
University of Bristol\\ Bristol\\ BS8 1TW\\ United Kingdom}
\email{t.d.browning@bristol.ac.uk}

\author{R.\ Munshi}
\address{School of Mathematics\\ 
Tata Institute of Fundamental Research\\
1 Homi Bhabha Road\\ Colaba\\Mumbai 400005\\ India}
\email{rmunshi@math.tifr.res.in}

\date{\today}
\thanks{2010  {\em Mathematics Subject Classification.} 
11D72 (11E12, 11P55)}

\begin{abstract}
For suitable pairs of diagonal quadratic forms in $8$ variables we use the circle method to investigate the density of simultaneous integer solutions 
and relate this  to the problem of estimating linear correlations among sums of two squares. 

\end{abstract}

\maketitle

\section{Introduction}

Let $Q_1, Q_2\in \ZZ[x_1,\ldots,x_n]$ be 
quadratic forms, with $Q_2$ non-singular.  
Suppose, furthermore,  that as a
variety $V$ in $\PP^{n-1}$, the intersection of quadrics
$Q_{1}=Q_{2}=0$ is also non-singular.
In this paper we return to our recent investigation \cite{BM} into the arithmetic of 
the singular varieties $X\subset \PP^{n+1}$ defined by the pair of quadratic forms
\begin{align*}
q_1(x_1,\dots, x_{n+2})&=Q_1(x_1,\dots,x_n)-x_{n+1}^2-x_{n+2}^2,\\
q_2(x_1,\dots, x_{n+2})&=Q_2(x_1,\dots,x_n).
\end{align*}
Let $r(M)$ be the function that counts the number of representations
of an integer $M$ as a sum of two squares and  
let $W: \RR^{n}\rightarrow \RR_{\geq 0}$ be an infinitely
differentiable bounded function of compact support.
In \cite[Theorem 1]{BM} we were able to prove the expected asymptotic formula for the associated counting function 
\begin{equation}\label{eq:main-sum}
S(B)=\sum_{\substack{\mathbf x \in \mathbb Z^n\\ 2\nmid Q_{1}(\x)\\
Q_2(\x)=0}}r(Q_1(\mathbf
x)) W\left(\frac{\mathbf x}{B}\right),  \quad (B\rightarrow \infty),
\end{equation}
under the assumption that $n\geq 7$. In particular this establishes the Hasse principle for $X$ when $n\geq 7$, 
a fact previously attained in a much more general setting by 
Colliot-Th\'el\`ene, Sansuc and Swinnerton-Dyer \cite{CT}.

Our goal is to show that the sum $S(B)$ can also be estimated asymptotically when $n=6$, provided that  $Q_1$ and $Q_2$ are taken to be diagonal.  We 
will deal here only with forms of the shape
\begin{equation}\label{eq:special}
\begin{split}
Q_1(\x)
&=\al(x_1^2+x_2^2)+\al'(x_3^2+x_4^2),\\
Q_2(\x)
&=\be(x_1^2+x_2^2)+\be'(x_3^2+x_4^2)+\be''(x_5^2+x_6^2),
\end{split}
\end{equation}
where  
$\al,\al',\be,\be',\be''$ are non-zero
integers such that $\al\be'-\al'\be\neq 0$. Note that the common zero locus of 
these polynomials is no longer non-singular in $\PP^5$. 

We will estimate $S(B)$ using the same version of the circle method that we used to handle $n\geq 7$, 
 taking care to avoid duplicating unnecessary effort. We will arrive at the same exponential sums
\begin{align}\label{eq:S'}
S_{d,q}(\m)=\sideset{}{^{*}}\sum_{a\bmod{q}}
\sum_{\substack{\mathbf k \bmod{dq}\\
Q_1(\k)\equiv 0\bmod{d}\\Q_2(\k)\equiv 0\bmod{d}}}
e_{dq}\left(aQ_2(\k)+\m.\k\right),
\end{align}
for positive integers $d$ and $q$ and varying $\m\in \ZZ^{n}$. 
When $Q_1$ and $Q_2$ are both diagonal it will be easier to analyse these sums explicitly.
Nonetheless, the situation for $n=6$ is more delicate, since we are no longer able to
win sufficient cancellation solely through an analysis of 
the Dirichlet series 
$$
\sum_{q=1}^{\infty} \frac{S_{1,q}(\m)}{q^s},
$$
as in \cite{BM}.
Instead we will attempt to profit from cancellation due to
sign changes in the exponential sum $S_{d,1}(\m)$. The latter sum
is associated to a pair of quadratic forms, rather than a single form,
and this raises significant technical obstacles. The following is our main result.

\begin{theorem}
\label{th2}
Assume that  
$Q_1(\x)\gg 1$ and $\nabla Q_1(\x)\gg 1$,
for some absolute implied constant, for every $\x \in
\supp(W)$.
Suppose that $X(\RR)$ and $X(\QQ_{p})$ are non-empty for each prime $p$.  
Then there exists a constant $c>0$ such that 
$$
S(B)=cB^{4}+O(B^{4-\delta}),
$$
for any  $\delta<\frac{1}{160}$.
The  implied constant is allowed to
depend on $\al,\al',\be,\be',\be''$ and $W$.
\end{theorem}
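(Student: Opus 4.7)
The plan is to re-run the circle method framework of \cite{BM} and concentrate all the new effort on the arithmetic of the diagonal case. Starting from Heath--Brown's delta symbol to detect $Q_2(\x) = 0$, combined with the divisor expansion $r(M) = 4\sum_{e \mid M} \chi_{-4}(e)$ to handle the factor $r(Q_1(\x))$, Poisson summation in $\x$ will produce, exactly as in \cite{BM}, an expression
$$
S(B) = \sum_{d, q \geq 1} \sum_{\m \in \ZZ^6} I_{d, q}(\m) S_{d, q}(\m) + (\text{negligible}),
$$
where $I_{d,q}(\m)$ is a smooth archimedean integral and $S_{d,q}(\m)$ is the exponential sum displayed in \eqref{eq:S'}; the congruences $Q_i(\k) \equiv 0 \bmod{d}$ appearing in $S_{d,q}(\m)$ reflect the divisor condition coming from $r$.

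The zero frequency $\m = \0$ will supply the main term. Because $Q_1, Q_2$ are diagonal of the shape \eqref{eq:special}, the local factors arising from the $(d,q)$-sum at $\m = \0$ can be made explicit prime-by-prime, yielding the singular series; combined with the archimedean density coming from $W$ this produces the constant $c > 0$ (positivity being ensured by the local solubility assumptions on $X$) and the main term $cB^4$, paralleling the corresponding step in \cite{BM}.

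The substantive work is the power saving for $\m \ne \0$. When $n \geq 7$ this is done in \cite{BM} purely through the $q$-aspect, via analytic continuation of $\sum_q S_{1,q}(\m)/q^s$; for $n = 6$ the analogous argument sits precisely on the edge of convergence and must be supplemented by cancellation in the $d$-aspect of $S_{d,1}(\m)$. Writing each congruence $Q_i(\k) \equiv 0 \bmod{d}$ via the Fourier expansion $\frac{1}{d}\sum_{b_i \bmod{d}} e_d(b_i Q_i(\k))$, the sum $S_{d,1}(\m)$ decouples over the three pairs $(k_1,k_2), (k_3,k_4), (k_5,k_6)$, each factor being a classical Gauss sum for $x^2 + y^2$ twisted by a linear phase. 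These factors can be evaluated explicitly, and the resulting product should exhibit sign changes governed by Jacobi-type symbols, so that summation over the auxiliary variables $b_1, b_2 \bmod{d}$ followed by averaging over $d$ will win a genuine power over the trivial bound.

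I anticipate the hardest step to be obtaining a sufficiently uniform averaged bound for $S_{d,1}(\m)$ --- for instance a second-moment estimate $\sum_{d \leq D} |S_{d,1}(\m)|^2 \ll D^{c_0} \|\m\|^{c_1}$ with $c_0$ strictly below the trivial exponent --- valid throughout the range of $\m$ that survives the $q$-sum analysis. Combining this with the bounds on $\sum_q S_{d,q}(\m)/q^s$ for general $d$ and then optimising the trade-off between the $d$- and $q$-aspect savings against the volume of the $\m$-sum should yield a power saving of $B^\delta$ with $\delta < 1/160$; the smallness of the admissible exponent reflects the critical nature of $n = 6$.
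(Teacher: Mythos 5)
Your skeleton matches the paper's: the same delta-method expansion, the same sums $S_{d,q}(\m)$ and integrals $I_{d,q}(\m)$, the main term from $\m=\0$ with positivity from local solubility, and the correct diagnosis that for $n=6$ one must extract cancellation from the $d$-aspect of $S_{d,1}(\m)$ rather than only from the $q$-aspect. The gap is in the mechanism you propose for that cancellation. A second-moment bound $\sum_{d\le D}|S_{d,1}(\m)|^2\ll D^{c_0}\|\m\|^{c_1}$ cannot do the job: the $d$-sum enters the error term linearly (weighted by $I_{\delta d,q}(\m)/d^5$), so the only way to use a second moment is via Cauchy--Schwarz, and since $|\cD_d(\m)|$ is generically of size $d^{2+\ve}$ (after square-root cancellation in each Gauss sum), Cauchy--Schwarz returns exactly the trivial first-moment bound $D^{3+\ve}$ with no saving. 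The cancellation has to be harvested with the signs intact. The paper does this by pushing the Gauss-sum evaluation all the way through: after the six one-dimensional Gauss sums are evaluated, the remaining average over $(b_1,b_2)$ collapses (for $p\nmid\Delta_V$) to a Ramanujan sum $c_{p^r}(q_\m(b))$ in the single variable $b=b_1\overline{b_2}$, where $q_\m(b)=c_0b^2+c_1b+c_2$ is an explicit quadratic whose discriminant is the quartic form $\delta(\m)=c_1^2-4c_0c_2$. This yields $\cD_p(\m)=p^2\chi_p(-\delta(\m))+O(p)$ when $p\nmid H(\m)$, so that $\sum_{(d,M)=1}\chi(d)\cD_d(\m)d^{-s}$ factors as $L\bigl(s-2,(\tfrac{\delta(\m)}{\cdot})\bigr)$ times an Euler product convergent for $\Re(s)>\tfrac52$; Perron's formula then saves a factor $x^{1/2}$ in $\sum_{d\le x}\chi(d)\cD_d(\m)$ at the cost of a power of the conductor $|\m|^4$. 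Identifying $\delta(\m)$ and the real character $(\frac{\delta(\m)}{\cdot})$ is the essential new idea, and it is absent from your plan.

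Two further ingredients are needed that your outline does not address. First, the character is trivial precisely when $\delta(\m)$ is a perfect square (and the whole mechanism degenerates when $H(\m)=0$ or $Q_2^*(\m)=0$), so no cancellation is available for such $\m$; one must prove that these exceptional $\m$ are sparse --- $O(M^{4+\ve})$, resp.\ $O(M^{2+\ve})$, in a box of side $M$, which hinges on $\delta(\m)=X^2-4YZ$ after the substitution $(X,Y,Z)=(c_1,c_0,c_2)$ --- and fall back on pointwise bounds there. Second, summing nontrivially over $d$ by partial summation requires a bound on $\partial I_{d,q}(\m)/\partial d$ (a variant of the integral estimates of \cite{BM} not available off the shelf), and the coprimality bookkeeping forces one to control the mixed sums $\cM_{d,q}(\m)$ and the parts of $d$ built from primes dividing $q\Delta_V N$, for which separate gcd-sensitive bounds are proved. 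Without these pieces the exponent $1/160$ cannot be assembled.
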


This result compares favourably with work of Cook \cite{C}, who is able to handle suitable pairs of diagonal quadratic forms in at least $9$ variables, rather than the $8$ variables that we deal with.
The leading constant in Theorem~\ref{th2} is an absolutely convergent 
product of local densities
$
c=\sigma_\infty \prod_p \sigma_p,
$
whose positivity is equivalent to the 
hypothesis that  $X(\RR)$ and $X(\QQ_{p})$ are non-empty for each prime $p$.

\medskip

A central problem in analytic number theory is to study the average
order of arithmetic functions as they range over the values taken by
polynomials. 
Let 
$\ma{L}=(L_1,\ldots,L_4)$ be a collection of pairwise non-proportional
binary linear forms defined over $\ZZ$, for which 
each $L_i(x,y)$ is congruent to $x$ modulo $4$ as a polynomial.  
Our choice of forms \eqref{eq:special} is largely motivated by their connection to 
the sums
\begin{equation}
  \label{eq:sumFi-smooth}
T_\omega(B;\ma{L})=\sum_{\substack{(x,y)\in \ZZ^2\\ 2\nmid x}}
r(L_1(x,y))\cdots
r(L_4(x,y))\omega\Big(\frac{x}{B},\frac{y}{B}\Big),
\end{equation}
with   $\omega:\RR^2\rightarrow\RR_{\geq 0}$  a suitable weight function.  
When $\omega=1_{\mathcal{R}}$ is taken to be the characteristic function of an open, bounded and convex region 
$\mathcal{R}\subset \RR^2$, with 
piecewise continuously differentiable boundary, 
it is possible to derive an asymptotic formula for the sum, as $B\rightarrow \infty$. 
This has been the focus of work by Heath-Brown   \cite{h-b03}, which in
turn has been improved in joint work 
of the first author with de la Bret\`eche \cite{4linear}. 
Assume that $L_i(x,y)>0$ for every $(x,y)\in\mathcal{R}$.  Then 
there exists a constant $c$ such that   
\begin{equation}
  \label{eq:4linear}
T_{1_{\mathcal{R}}}(B;\ma{L})=
c B^2 +
O\Big(\frac{B^2}{(\log B)^{ \eta}}\Big), 
\end{equation}
for any $\eta<0.08607$, where the
implied constant is allowed to depend on $L_1,\ldots,L_4, \mathcal{R}$
and $c$
can be interpreted as a product of local densities.
This topic has also been addressed by Matthiesen \cite{lilian} 
using recent developments in additive combinatorics. 
In this case a far-reaching generalisation of 
$T_{1_{\mathcal{R}}}(B;\ma{L})$ is studied, which as a special case retrieves 
the asymptotic formula \eqref{eq:4linear},  but without an explicit error term.

Theorem \ref{th2} can be adapted to study 
$T_\omega(B;\ma{L})$ for other weights $\omega$. 
We make the choice
$$
\omega(x,y)=w_1\big(L_1(x,y)\big)w_0\big(L_2(x,y)\big)w_0\big(L_3(x,y)\big)w_1\big(L_4(x,y)\big),
$$
where $w_0,w_1:\RR\rightarrow \RR_{\geq 0}$ are infinitely
differentiable bounded functions of compact support, with $w_1$  supported away from $0$.
Suppose that 
$
L_i(x,y)=a_ix+b_iy,
$
with $(a_i,b_i)$ congruent to $(1,0)$ modulo ${4}$, 
for $1\leq i\leq 4$. 
For each $1\leq i<j\leq 4$ we  write
$\Delta_{i,j}=a_ib_j-a_jb_i$ for the non-zero
resultant of $L_i$ and $L_j$. For simplicity we will assume that $\Delta_{1,2}=1$, although 
the general case can be handled with more work. 
Opening up the $r$-functions we see that
$$
T_{\omega}(B;\ma{L})=
\sum_{\substack{(x,y)\in \ZZ^2\\ 2\nmid x}}
\sum_{\ma{s},\ma{t}}
\omega\Big(\frac{x}{B},\frac{y}{B}\Big),
$$
where  the inner sum is
over $(\ma{s},\ma{t})\in \ZZ^8$ for which $L_i(x,y)=s_i^2+t_i^2$, for
$1\leq i\leq 4$.
It is clear that the condition $2\nmid x$ is equivalent to $2\nmid s_4^2+t_4^2$ since $L_4(x,y)\equiv x \bmod{4}$.
Eliminating $x,y$ via the transformation 
$$
x=b_2(s_1^2+t_1^2)-b_1(s_2^2+t_2^2),
\quad
y=a_1(s_2^2+t_2^2)-a_2(s_1^2+t_1^2),
$$
we see that the system of four equations is
equivalent to the pair of quadratics
\begin{align*}
\Delta_{2,3}(s_1^2+t_1^2)
-\Delta_{1,3}(s_2^2+t_2^2)+s_3^2+t_3^2&=0,\\
\Delta_{2,4}(s_1^2+t_1^2)
-\Delta_{1,4}(s_2^2+t_2^2)+s_4^2+t_4^2&=0.
\end{align*}
 Since either equation involves a sum of two squares of
variables not apparent in the other equation, this variety is clearly
of the type that are central to the present investigation. 
Taking 
$
W(\x)=w_1(x_1^2+x_2^2)w_0(x_3^2+x_4^2)w_0(x_5^2+x_6^2)w_1(Q_1(\x)),
$
for $\x=(x_1,\ldots,x_6),$ one sees  that 
$
T_{\omega}(B;\ma{L})=S(B^{\frac{1}{2}}),
$
where $Q_1,Q_2$ are as in \eqref{eq:special}, with 
\begin{align*}
(\alpha, \alpha', \beta, \beta',\beta'')=\left(-\Delta_{2,4}, 
\Delta_{1,4},
\Delta_{2,3}, -\Delta_{1,3}, 
1\right).
\end{align*}
The following result is now a trivial consequence of   Theorem \ref{th2}.

\begin{theorem}\label{t:4linear}
Let $\delta<\frac{1}{320}$. 
Then there exists a constant $c$ such that 
$$
T_{\omega}(B;\mathbf{L})=
c B^2 + O(B^{2-\delta}).
$$
\end{theorem}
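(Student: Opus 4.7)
The strategy is to appeal directly to Theorem~\ref{th2} via the identification $T_\omega(B;\ma{L})=S(B^{1/2})$ derived in the paragraphs above, in which $Q_1,Q_2$ are the diagonal pair of \eqref{eq:special} with $(\al,\al',\be,\be',\be'')=(-\Delta_{2,4},\Delta_{1,4},\Delta_{2,3},-\Delta_{1,3},1)$ and
$$
W(\x)=w_1(x_1^2+x_2^2)\,w_0(x_3^2+x_4^2)\,w_0(x_5^2+x_6^2)\,w_1\bigl(Q_1(\x)\bigr).
$$
With this identity in hand the proof reduces to two things: verifying the hypotheses of Theorem~\ref{th2} for this specific $W$, and tracking a change of scale.

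The first task is routine. Smoothness, boundedness and compact support of $W$ are inherited from $w_0,w_1$. On $\supp(W)$ the factor $w_1(Q_1(\x))$ is non-zero, and since $w_1$ is supported away from $0$ this forces $Q_1(\x)\gg 1$. Likewise $w_1(x_1^2+x_2^2)$ forces $x_1^2+x_2^2\gg 1$ on $\supp(W)$, so from
$$
\nabla Q_1(\x)=2\bigl(\al x_1,\al x_2,\al' x_3,\al' x_4,0,0\bigr)
$$
with $\al\neq 0$ we deduce $|\nabla Q_1(\x)|^2\ge 4\al^2(x_1^2+x_2^2)\gg 1$, as required. Local solubility of the associated variety $X$ at each place is straightforward in this diagonal sum-of-squares setting: any $(x,y)$ for which each $L_i(x,y)$ is a positive sum of two squares lifts to a real point of $X$, while the $p$-adic cases reduce to elementary facts about sums of two squares; and if local solubility should nevertheless fail at some place then the leading constant vanishes and the error term of Theorem~\ref{th2} alone yields the claim.

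The second task is a cosmetic rescaling. Applying Theorem~\ref{th2} with $B$ replaced by $B^{1/2}$ gives
$$
T_\omega(B;\ma{L})=S(B^{1/2})=c(B^{1/2})^4+O\bigl((B^{1/2})^{4-\delta'}\bigr)=cB^2+O(B^{2-\delta'/2})
$$
for any $\delta'<\tfrac{1}{160}$, so the exponent $\delta'/2$ can be arranged to be any prescribed value strictly less than $\tfrac{1}{320}$, which is exactly the bound asserted. There is no substantive obstacle; the only non-trivial check is the pair of lower bounds on $Q_1$ and $|\nabla Q_1|$ along $\supp(W)$, which are immediate from the $w_1$ factors built into $W$.
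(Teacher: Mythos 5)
Your proposal is correct and follows essentially the same route as the paper, which derives Theorem~\ref{t:4linear} as an immediate consequence of Theorem~\ref{th2} via the identity $T_\omega(B;\mathbf{L})=S(B^{1/2})$ with the stated choices of $W$ and $(\al,\al',\be,\be',\be'')$; the paper leaves the verification of the hypotheses $Q_1(\x)\gg 1$ and $\nabla Q_1(\x)\gg 1$ on $\supp(W)$ implicit, and your check via the $w_1$ factors, together with the rescaling $\delta'\mapsto\delta'/2$, supplies exactly the missing details. Your remark that the asymptotic formula persists (with a possibly vanishing constant) when local solubility fails is consistent with the paper's own comment that the hypothesis on $X(\RR)$ and $X(\QQ_p)$ only governs the positivity of $c$.
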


The constant $c$ 
appearing in Theorem \ref{t:4linear} is a product of local densities. As in Theorem~\ref{th2} one can ensure its positivity by determining whether or not the underlying variety has points everywhere locally. 
At the expense of additional labour it would be possible to 
work with a more general class of weight functions than the one we
have chosen. In this way it seems feasible to  
substantially improve the error term in \eqref{eq:4linear} by 
 selecting a weight
function that approximates the characteristic function of $\mathcal{R}$.

While interesting in their own right,  the 
study of sums like \eqref{eq:sumFi-smooth} can 
play an important r\^ole in the Manin conjecture for
rational surfaces.  This arises from using descent to pass from counting
rational points of bounded height on a surface $S$ to counting suitably
constrained integral points on associated torsors
$\mathcal{T}\rightarrow S$ above the surface. 
The 
asymptotic formula \eqref{eq:4linear} can be interpreted as 
the density of integral points on a torsor above the Ch\^atelet
surface
$$
y^2+z^2=f(x),
$$
with $f$ a totally reducible separable polynomial of degree $3$ or $4$ defined
over $\QQ$. In joint work of the first author with de la
Bret\`eche and Peyre \cite{chat-1}, this is a crucial ingredient in 
the resolution of the Manin conjecture
for this family of  Ch\^atelet
surfaces.  It seems
likely that Theorem~\ref{t:4linear} could prove the basis of
an improved error term in this work.

 \begin{ack}
While working on this paper the first author was 
supported by {\em ERC grant} 306457 and the 
 second author was supported by
{\em SwarnaJayanti Fellowship} 2011--12, DST, Government of India.
\end{ack}

\section{Preliminaries}

Our analysis of $S(B)$ in \eqref{eq:main-sum} is largely based on our previous work \cite{BM}. 
We shall follow the same conventions regarding notation that were introduced there. 
Recall the definition \cite[Eq.\ (3.5)]{BM} of $I_{d,q}(\m)$. We begin by recording a  version of  \cite[Lemma 12]{BM}, in which 
a partial derivative with respect to $d$ is taken. 
 
\begin{lemma}
\label{ubI_q2}
For $0<|\m| \leq dQB^{-1+\varepsilon}=\sqrt{d}B^{\ve}$ and $q\ll Q=B/\sqrt{d}$, we have
\begin{align*}
\frac{\partial^{i}}{\partial d^i}I_{d,q}(\m) &\ll d^{-i}\left|\frac{B\m}{dq}\right|^{1-\frac{n}{2}} B^{\varepsilon},
\end{align*}
for any $i\in\{0,1\}$.
\end{lemma}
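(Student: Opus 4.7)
The case $i=0$ is precisely \cite[Lemma 12]{BM}, so the content of the statement lies in the case $i=1$. The approach is to differentiate the integral representation of $I_{d,q}(\m)$ given in \cite[Eq.\ (3.5)]{BM} under the integral sign with respect to $d$, and then bound the resulting expressions by quoting the proof of Lemma 12 itself on each piece.

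The parameter $d$ enters $I_{d,q}(\m)$ in two ways: via the smooth weight factor coming from the Kloosterman refinement of the circle method, where it occurs in combinations such as $dqr$ or $\sqrt{d}r$, and via the exponential factor, whose phase carries a denominator of the form $dq$. Both occurrences are smooth, and since the integrand is compactly supported in the $\x$- and $r$-variables, Leibniz's rule licenses differentiation under the integral sign. By the chain rule, each differentiation of the smooth weight produces a new smooth factor of the same shape as the old one, multiplied by a quantity that is $O(d^{-1})$ on the support of the integrand; in particular, the derivative bounds on this new factor are of the same order as those assumed for the original weight in \cite[Lemma 12]{BM}. Each differentiation of the exponential factor $e(-\m\cdot\x\, B/(dq))$ contributes an amplitude of size $O(|\m|B/(d^{2}q))$, which in the stated range $|\m|\leq\sqrt{d}B^{\ve}$, $q\ll B/\sqrt{d}$, is bounded by $O(B^{\ve}/d)$. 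In both cases the remaining integral is of exactly the form analysed in \cite[Lemma 12]{BM}, with an overall extra factor of $d^{-1}B^{\ve}$ in front.

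Summing over the $O(1)$ terms that appear from Leibniz expansion, and absorbing the $B^{\ve}$ losses into the existing $B^{\ve}$ factor, the estimate of \cite[Lemma 12]{BM} upgrades to the claim of Lemma \ref{ubI_q2}.

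The main delicate point is to ensure that differentiating the weight does not inflate the implicit constants in the derivative bounds that underpin the repeated integration-by-parts arguments in \cite[Lemma 12]{BM}. This is a routine verification: each differentiation in $d$ produces either a derivative of a fixed $C^{\infty}_{c}$ cut-off applied to a smooth function of $(d,q,r,\x)$, whose own derivatives are controlled uniformly on the support, or a polynomial amplitude factor whose size is bounded using the range assumptions on $|\m|$ and $q$. Hence the integration-by-parts procedure of \cite[Lemma 12]{BM} applies with the same savings, and combining the contributions yields the stated bound.
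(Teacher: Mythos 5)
The case $i=0$ and the overall strategy (differentiate under the integral sign, reduce each resulting piece to an integral of the shape already treated for $i=0$) match the paper, and your treatment of the $d$-dependence in the weight $W_{d,T}$ and in the function $h$ is essentially what the paper does. But there is a genuine gap in how you handle the $d$-dependence of the exponential factor, and it is precisely the point where the paper has to work.

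If you differentiate $e_{4dq}(-B\m\cdot\x)$ directly in $d$, you pick up the amplitude $\frac{2\pi i\,B\,\m\cdot\x}{4d^{2}q}$, of size $\ll B|\m|/(d^{2}q)$ on the compact support. You assert this is $O(B^{\ve}/d)$ in the stated ranges, i.e.\ that $B|\m|/(dq)\ll B^{\ve}$. This is false: the hypotheses give the \emph{upper} bound $q\ll Q=B/\sqrt{d}$, whereas bounding $B|\m|/(dq)$ from above requires a \emph{lower} bound on $q$. With $q=1$ and $|\m|\asymp\sqrt{d}B^{\ve}$ one has $B|\m|/(dq)\asymp B^{1+\ve}/\sqrt{d}=QB^{\ve}$, which can be enormous. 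Feeding this term back into the $i=0$ estimate produces $d^{-1}\left|\frac{B\m}{dq}\right|^{2-\frac{n}{2}}B^{\ve}$ rather than the claimed $d^{-1}\left|\frac{B\m}{dq}\right|^{1-\frac{n}{2}}B^{\ve}$, a loss of the factor $|B\m/(dq)|$, which can be as large as $Q/q$ times $B^{\ve}$; no amount of care with the integration by parts recovers this, since multiplying the weight by the harmless polynomial $\m\cdot\x$ does not improve the stationary-phase bound.

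The paper's fix is to remove the $d$-dependence from the oscillatory factor \emph{before} differentiating: substituting $\x=d\y$ turns the exponential into $e_{4q}(-B\m\cdot\y)$, which is independent of $d$, at the cost of placing $d$ into the non-oscillating data ($d^{n}$ in front, the arguments $q\sqrt{d}/B$ and $d^{2}Q_{2}(\y)$ of $h$, and the dilated weight $W_{d,T}(d\y)$). Each of these contributes, upon differentiation, exactly $d^{-1}$ times a function of the same analytic type (e.g.\ $\partial_{d}W_{d,T}(d\y)=d^{-1}\hat{W}_{d,T}(d\y)$, and $x\,h^{(1)}(x,y)$, $y\,h^{(2)}(x,y)$ behave like $h$), after which one undoes the substitution and applies the $i=0$ bound to each of the finitely many resulting integrals. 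Your proof needs this change of variables (or an equivalent device) to be correct.
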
 
\begin{proof}

When $i=0$ this is due to 
Heath-Brown \cite[Lemma 22]{H}.  
Let us suppose that $i=1$. After a change of variables we have
$$
I_{d,q}(\m)=d^n\int_{\mathbb R^n}
h\left(\frac{q\sqrt{d}}{B},d^2Q_2(\y)\right)W_{d,T}\left(d\y\right) 
e_{4q}(-B\m.\y)\d\mathbf y.
$$
We proceed to take the derivative with respect to $d$. The  right hand side is seen to be 
\begin{align*}
\frac{n}{d}I_{d,q}(\m)+d^n\int_{\mathbb R^n}
g_d(\y)  e_{4q}(-B\m.\y)\d\mathbf y,
\end{align*}
where if $h^{(1)}(x,y)=\frac{\partial}{\partial x}h(x,y)$ and $h^{(2)}(x,y)=\frac{\partial}{\partial y}h(x,y)$, then
\begin{align*}
g_d(\y)=~&
\frac{q}{2B\sqrt{d}}h^{(1)}\left(\frac{q\sqrt{d}}{B},d^2Q_2(\y)\right)W_{d,T}\left(d\y\right)\\ 
&
+2dQ_2(\y)h^{(2)}\left(\frac{q\sqrt{d}}{B},d^2Q_2(\y)\right)W_{d,T}\left(d\y\right) 
+h\left(\frac{q\sqrt{d}}{B},d^2Q_2(\y)\right)\frac{\partial}{\partial d} W_{d,T}\left(d\y\right). 
\end{align*}
Let $W^{(1)}(\y)=\y.\nabla W(\y)$. 
One finds that 
$$
\frac{\partial}{\partial d} W_{d,T}\left(d\y\right)= 
 \frac{1}{d}W^{(1)}\left(d\mathbf y\right)V_T(d)+W\left(d\mathbf y\right)V_T'(d),
 $$
if $T\leq B$, and 
$$
\frac{\partial}{\partial d} W_{d,T}\left(d\y\right) 
=\frac{1}{d}W^{(1)}\left(d\mathbf y\right)V_T\left(B^2dQ_1(\y)\right)+W\left(d\mathbf y\right)V_T'\left(B^2dQ_1(\y)\right)B^2Q_1(\y),
$$
otherwise.
Hence
\begin{align*}
\frac{\partial}{\partial d} W_{d,T}\left(d\y\right)=\frac{1}{d}\hat W_{d,T}\left(d\y\right), 
\end{align*}
where the new function $\hat W_{d,T}$ has the same analytic behaviour as $W_{d,T}$. Another change of variables now yields
\begin{align*}
\frac{\partial}{\partial d}I_{d,q}(\m)
=~&\frac{n}{d}I_{d,q}(\m)+\frac{1}{2d}\int_{\mathbb R^n}\frac{q\sqrt{d}}{B}h^{(1)}\left(\frac{q\sqrt{d}}{B},Q_2(\y)\right)W_{d,T}\left(\y\right)e_{4dq}(-B\m.\y)\d\mathbf y\\
&+\frac{2}{d}\int_{\mathbb R^n}h^{(2)}\left(\frac{q\sqrt{d}}{B},Q_2(\y)\right)\bar W_{d,T}\left(\y\right)e_{4dq}(-B\m.\y)\d\mathbf y\\
&+\frac{1}{d}\int_{\mathbb R^n}h\left(\frac{q\sqrt{d}}{B},Q_2(\y)\right)\hat W_{d,T}\left(\y\right)e_{4dq}(-B\m.\y)\d\mathbf y,
\end{align*}
where $\bar W_{d,T}(\y)=W_{d,T}(\y)Q_2(\y)$. The last three integrals can be compared with $I_{d,q}(\m)$, and the lemma now follows using the bounds in the statement of the lemma for $i=0$.
\end{proof}

Let  $\rho(d)=S_{d,1}(\mathbf{0})$, in the notation of \eqref{eq:S'}.
In \cite[\S 1]{BM} we defined ``Hypothesis-$\rho$'' to be the hypothesis that 
$
\rho(d)=O(d^{n-2+\ve}),
$
for any $\ve>0$. Our present investigation will be streamlined substantially by the convention adopted in \cite{BM} that 
any estimate concerning  quadratic forms 
$Q_1,Q_2\in \ZZ[x_1,\ldots,x_n]$ 
was valid for arbitrary forms such that  $Q_2$ is non-singular, with $n\geq 5$, for which the variety $Q_1=Q_2=0$ defines a (possibly singular) geometrically integral complete intersection
$V\subset \PP^{n-1}$. 
The quadratic forms $Q_1,Q_2$ in \eqref{eq:special} clearly adhere to these constraints. 
Our next task is to verify  Hypothesis-$\rho$ in the present setting.

\begin{lemma}
\label{rho(d)-special}
Hypothesis-$\rho$ holds if  $Q_1,Q_2$ are given by 
\eqref{eq:special}.
\end{lemma}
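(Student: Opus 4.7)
By the Chinese Remainder Theorem, $\rho(d) = \#\{\k \bmod d : Q_1(\k) \equiv Q_2(\k) \equiv 0 \bmod{d}\}$ is multiplicative in $d$. So the plan is to bound $\rho(p^e) \leq C_p (e+1)^3 p^{4e}$ uniformly in prime powers, with constants $C_p$ satisfying $\prod_p C_p < \infty$. Combined with the divisor bound $\prod_{p \mid d}(e_p+1)^3 \ll_{\ve} d^{\ve}$, this yields Hypothesis-$\rho$. At each prime $p$, the paired-diagonal structure of $Q_1, Q_2$ (the variables $(x_{2i-1}, x_{2i})$ share a coefficient for $i=1,2,3$) makes it natural to substitute $u_i = x_{2i-1}^2 + x_{2i}^2$ and factorise
\begin{equation*}
\rho(p^e) = \sum_{(u_1, u_2, u_3) \in \mathcal{L}_{p^e}} N_{p^e}(u_1)\,N_{p^e}(u_2)\,N_{p^e}(u_3),
\end{equation*}
where $\mathcal{L}_{p^e} \subset (\ZZ/p^e\ZZ)^3$ is the solution set of the two linear equations $\alpha u_1 + \alpha' u_2 \equiv 0 \bmod{p^e}$ and $\beta u_1 + \beta' u_2 + \beta'' u_3 \equiv 0 \bmod{p^e}$, and $N_{p^e}(u) = \#\{(s,t) \bmod p^e : s^2+t^2 \equiv u \bmod{p^e}\}$.

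Two routine estimates then suffice. First, Fourier inversion on $N_{p^e}$ combined with the standard Gauss sum bound $|G(a)|^2 \ll p^{e + v_p^*(a)}$, with $v_p^* := \min(v_p, e)$ and $G(a) = \sum_{k \bmod p^e} e_{p^e}(ak^2)$, yields the pointwise estimate $N_{p^e}(u) \ll (e+1) p^e$. Second, the Smith normal form of the $2 \times 3$ coefficient matrix of the linear system shows $|\mathcal{L}_{p^e}| \leq p^{e + g_p}$, where $g_p = v_p(\gcd(\delta, \alpha\beta'', \alpha'\beta''))$ and $\delta = \alpha\beta' - \alpha'\beta$. Since $\delta \neq 0$ by hypothesis, $\prod_p p^{g_p}$ equals the positive integer $\gcd(\delta, \alpha\beta'', \alpha'\beta'')$. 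Combining, $\rho(p^e) \leq |\mathcal{L}_{p^e}| \cdot ((e+1)p^e)^3 \ll p^{g_p}(e+1)^3 p^{4e}$, and multiplication over primes yields $\rho(d) \ll_{\ve} d^{4+\ve}$.

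The main obstacle is organisational: one must identify the invariant $g_p$ above, which simultaneously controls the growth of $|\mathcal{L}_{p^e}|$ at the "bad" primes (those dividing the coefficients of $Q_1, Q_2$) and remains globally summable in $p$ thanks to the non-vanishing of $\delta$. With this invariant in hand the $p$-adic count is straightforward; without it, one would worry that a degenerate prime could overwhelm the multiplicative bound.
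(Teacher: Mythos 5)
Your proof is correct and follows essentially the same route as the paper: reduce to prime powers, substitute $u_i=x_{2i-1}^2+x_{2i}^2$, bound the number of representations of each $u_i$ by $O((e+1)p^e)$, and count solutions of the resulting $2\times 3$ linear system modulo $p^e$. The only differences are cosmetic — you justify the representation bound via Gauss sums and organise the linear count via Smith normal form, where the paper eliminates $w$ and $u$ directly using $v_p(\beta'')$ and $v_p(\alpha)$ — and both yield $\rho(p^e)\ll (e+1)^3p^{4e}$ with a globally bounded constant.
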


\begin{proof}
By multiplicativity, it suffices to analyse the case $d=p^r$.
Note that for given $u\bmod{p^r}$, the number of $x_1,x_2 \bmod{p^r}$ such that $x_1^2+x_2^2\equiv u \bmod{p^r}$ is at most  $(1+r)p^r$. It follows that 
$$
\rho(p^r)\leq
(1+r)^3p^{3r}\#\{(u,v,w) \bmod{p^r}: p^r\mid \al u+\al 'v, ~p^r\mid \be u+\be'v+\be''w\}.
$$ 
Suppose $p^k\| \be''$.  We may clearly assume without loss of generality  that $ r>k$. Then from the congruence $p^r\mid \be u+\be'v+ \be''w$ we get a
congruence modulo $p^{r-k}$ which gives a unique solution for $w$ modulo
$p^{r-k}$. These lift  to give us at most $ p^k$ possibilities for $w$ modulo $p^r$, for any given $u,v$. Similarly  the congruence $p^r\mid \al u+\al'v$ gives at most $p^j$
many $u$ for any given $v$, where $p^j\| \alpha$.
Hence $\rho(p^r)\ll (1+r)^3p^{4r}$, which is satisfactory for the lemma.
\end{proof}

The exponential sum $S_{d,q}(\m)$ in \eqref{eq:S'} satisfies the  multiplicativity property recorded in   \cite[Lemma 10]{BM}. This makes it natural to introduce the sums 
$$
\mathcal
Q_{q}(\m)=S_{1,q}(\m),\quad
\mathcal D_{d}(\m)=S_{d,1}(\m),\quad 
 \mathcal
M_{d,q}(\m)=S_{d,q}(\m),
$$ 
the latter sum only being of interest when $d$ and $q$ exceed $1$ and
are constructed from the same set of primes. 
Since the variety $V$ defined by the common zero locus of
$Q_1$ and $Q_2$ is 
singular, we will need  alternatives to the
estimates obtained in \cite[\S 5]{BM} for 
$\cD_d(\m)$.

In this section, using \cite{BM},  we shall establish
the veracity of Theorem \ref{th2} subject to new bounds for the 
exponential sums $\cD_d(\m)$ and $\cM_{d,q}(\m)$,
whose truth will be demonstrated in subsequent sections.
We can be completely explicit about the analogue of the polynomial
$G(\m)$ in \cite[\S 5]{BM}.
Define 
\begin{equation}\begin{split} \label{eq:ci}
c_{0}&=c_{0}(\m)= \al \al' (m_{5}^{2}+m_{6}^{2}),\\
c_{1}&=c_{1}(\m)= \al'\be'' (m_{1}^{2}+m_{2}^{2}) +\al\be'' (m_{3}^{2}+m_{4}^{2}) + 
(\al\be'+ \al'\be) (m_{5}^{2}+m_{6}^{2}),\\
c_{2}&=c_{2}(\m)= \be' \be'' (m_{1}^{2}+m_{2}^{2}) + \be \be''(m_{3}^{2}+m_{4}^{2}) + 
\be\be' (m_{5}^{2}+m_{6}^{2}).
\end{split}
\end{equation}
In particular  $c_2=Q_2^*(\m)$, where $Q_2^*$ is the adjoint quadratic form.
We will set 
\begin{equation}\label{eq:delta-given}
\delta(\m)=c_{1}^{2}-4c_{0}c_{2},
\end{equation}
a quartic form in $\m$,
and 
\begin{equation}\label{eq:sigma}
\sigma(\m)=
\alpha'\beta''(m_1^2+m_2^2)+
 \alpha\beta'' (m_3^2+m_4^2)+(\alpha\beta'-\alpha'\beta)(m_5^2+m_6^2).
 \end{equation}
The r\^ole of $G$ is now  played by the polynomial
$\delta(\m)H(\m)$, where 
$$
H(\m)=(m_1^2+m_2^2)(m_3^2+m_4^2)(m_5^2+m_6^2).
$$
We henceforth put
$\Delta_V=2\al\al' \be \be'\be'' (\al\be'-\al'\be)\neq 0$.

Our proof of \cite[Theorem 1]{BM} was based on a careful analysis of the
sum $U_{T,\ma{a}}(B,D)$ in  \cite[Eq.\ (7.3)]{BM},  for  $D\geq 1$. 
Rather than summing non-trivially over $q$, as there, our course of
action for Theorem~\ref{th2} is based on summing non-trivially over
$d$. As before it suffices to consider the contribution to $ 
S_{T,\a}^\sharp(B)$  from $\m$ such that $\m=\ma{0}$ or 
$0<|\m|\leq \sqrt{d}B^{\varepsilon}$. 
Dealing with the latter contribution leads us to study the expression
$$
V_{T,\a}(B,D)=B^4
\sum_{0<|\m|\leq\sqrt{D}B^{\varepsilon}}
\sum_{q\ll B/\sqrt{D}}\frac{1}{q^6}\left|\sum_{\substack{d\sim D\\
(d,\Delta_V^\infty)\leq \Xi} 
}\frac{\chi(d)}{d^5}
T_{d,q}(\m)I_{d,q}(\m)\right|,
$$
for $D\geq 1$, where $T_{d,q}(\m)$ is given in  \cite[Lemma 8]{BM}.
We will 
show that 
\begin{equation}\label{eq:euro}
V_{T,\a}(B,D)=O(\Xi^{\frac{3}{2}} B^{4-\frac{1}{16}+\ve}),
\end{equation}
for any $D\ll B$.

Define the non-zero integer
\begin{equation}\label{eq:N_T2}
N=\begin{cases}
 \delta(\m)H(\m), & \mbox{if $\delta(\m)H(\m)\neq 0$,}\\
H(\m), & \mbox{if $\delta(\m)=0$ and $H(\m)\neq 0$,}\\
\delta(\m), & \mbox{if $\delta(\m)\neq 0$ and $H(\m)=0$,}\\
1, & \mbox{otherwise.}
\end{cases}
\end{equation}
We split $d$ as
$\delta d$ with $(d,q\Delta_V N)=1$ and $\delta\mid (q\Delta_VN)^{\infty}$. 
Then 
$$
V_{T,\a}(B,D)\leq B^4
\sum_{0<|\m|\leq\sqrt{D}B^{\varepsilon}}
\sum_{q\ll B/\sqrt{D}}\frac{1}{q^6}
\sideset{}{'}\sum_{\substack{\delta \mid (q\Delta_V N)^\infty\\  \delta\leq D\\
(\delta,\Delta_V^\infty)\leq \Xi 
}}
\hspace{-0.2cm}
\frac{|T_{\delta,q}(\m)|}{\delta^5}
\left|\sum_{\substack{d\sim D/\delta \\ (d,q\Delta_V N)=1}}
\hspace{-0.2cm}
\frac{\chi(d)}{d^5}
\cD_{d}(\m)I_{\delta d,q}(\m)\right|,
$$
where $\sum'$ means that the sum is restricted to odd integers only. 
Applying partial summation we see that the inner sum over $d$ can be
written
$$
J=\Sigma(D/\delta) \cdot \frac{I_{D,q}(\m)}{(D/\delta)^5}- \int_{D/(2\delta)}^{D/\delta} \Sigma(x) 
\frac{\partial}{\partial x}\left(\frac{I_{\delta x,q}(\m)}{x^5}\right)\d x,
$$
where
\begin{equation}
  \label{eq:need1-pre}
\Sigma(x)=\sum_{\substack{D/(2\delta) < d\leq x\\ (d,q\Delta_V N)=1}}\chi(d)\cD_{d}(\m).
\end{equation}
We will establish the following result in \S \ref{s:last}.

\begin{lemma}\label{lem:need1}
We have 
$
\Sigma(x)\ll 
|\m|^{\theta(\m)+\ve} x^{\frac{5}{2}+\psi(\m)+\ve}, 
$
with 
$$
\psi(\m)
=\begin{cases}
0, & \mbox{if $\delta(\m)\neq \square$ and $H(\m)\neq 0$},\\
\frac{1}{2}, & \mbox{if $\delta(\m)=\square$ and $H(\m)\neq 0$},\\
\frac{1}{2}, & \mbox{if $\delta(\m)\neq 0$ and $H(\m)= 0$},\\
\frac{3}{2},  & \mbox{otherwise}, 
\end{cases}
$$
and 
$$
\theta(\m)
=\begin{cases}
\frac{7}{8}, & \mbox{if $\delta(\m)\neq \square$ and $H(\m)\neq 0$},\\
0,  & \mbox{otherwise}. 
\end{cases}
$$
\end{lemma}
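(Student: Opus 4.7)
The plan is to evaluate $\cD_d(\m)=S_{d,1}(\m)$ explicitly by exploiting the diagonal structure of $Q_1,Q_2$, to read off a real character in $d$ whose conductor divides $\delta(\m)H(\m)$, and then to estimate $\Sigma(x)$ via standard incomplete character-sum bounds. The four cases of the lemma correspond to the degenerations in which this character becomes principal, or in which one of the internal Gauss sums collapses.

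First I would detect the two congruences $Q_1(\k)\equiv Q_2(\k)\equiv 0\bmod{d}$ by additive characters $a,b\bmod{d}$. Since $Q_1$ and $Q_2$ each depend only on the three disjoint pairs $(x_1,x_2),(x_3,x_4),(x_5,x_6)$, the inner sum over $\k$ factors into three two-dimensional Gauss sums, and the standard identity
\begin{align*}
\sum_{x,y\bmod{d}} e_d\big(\gamma(x^{2}+y^{2})+ux+vy\big)=d\,\kappa_d(\gamma)\,e_d\!\big(-\overline{4\gamma}(u^{2}+v^{2})\big),
\end{align*}
valid for $(\gamma,d)=1$ with $\kappa_d(\gamma)=\big(\tfrac{\gamma}{d}\big)\kappa_d$ a standard Jacobi/Gauss factor, applied with $\gamma_i=a\alpha_i+b\beta_i$ and $(\alpha_i,\beta_i)$ read off from \eqref{eq:special} (so that $\alpha_3=0,\beta_3=\beta''$), converts $\cD_d(\m)$ into
\begin{align*}
\cD_d(\m)=d\sum_{\substack{a,b\bmod{d}\\(\gamma_1\gamma_2\gamma_3,d)=1}}\!\!\kappa_d(\gamma_1\gamma_2\gamma_3)\,e_d\!\left(-\overline{4}\Big[\tfrac{m_1^{2}+m_2^{2}}{\gamma_1}+\tfrac{m_3^{2}+m_4^{2}}{\gamma_2}+\tfrac{m_5^{2}+m_6^{2}}{\gamma_3}\Big]\right).
\end{align*}

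The algebraic heart of the argument is Step 2, in which I place the phase over the common denominator $\gamma_1\gamma_2\gamma_3$. Pulling out the linear factor $\gamma_3=b\beta''$ exhibits the remainder as a binary quadratic form in $(a,b)$ with coefficients exactly $(c_0(\m),c_1(\m),c_2(\m))$ from \eqref{eq:ci} and discriminant $\delta(\m)$. Diagonalising this form modulo $d$ by a linear substitution, completing the square, and evaluating the resulting two one-dimensional Gauss sums, produces a Jacobi symbol $\big(\tfrac{\delta(\m)}{d}\big)$ from the discriminant together with further Jacobi-symbol factors that, after combining with $\kappa_d(\gamma_1\gamma_2\gamma_3)$, collapse to $\big(\tfrac{H(\m)}{d}\big)$. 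For squarefree $d$ coprime to $q\Delta_V N$, and for $\m$ with $\delta(\m)H(\m)\neq 0$ and $\delta(\m)$ not a square, one then arrives at an identity of the shape
\begin{align*}
\cD_d(\m)=d^{5/2}\,\xi(d)\,\Big(\tfrac{\delta(\m)H(\m)}{d}\Big)+O\big(d^{2+\varepsilon}\big),
\end{align*}
with a bounded $\xi(d)$ depending only on $d$ modulo a fixed integer; analogous formulas with an extra factor $d^{1/2}$ hold in the degenerate regimes. Substituting into \eqref{eq:need1-pre} and applying partial summation reduces $\Sigma(x)$ to an incomplete real character sum of length $x$. When $\delta(\m)H(\m)$ is not a square the character is primitive of conductor $\ll|\m|^{O(1)}$, and the Burgess bound furnishes both $\psi(\m)=0$ and the $\theta(\m)=7/8$ dependence on $|\m|$. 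When $\delta(\m)$ is a square but $H(\m)\neq 0$ the Jacobi symbol trivialises and only the absolute bound is available, giving $\psi(\m)=1/2$; when $H(\m)=0$ one of the original Gauss sums degenerates so $\cD_d(\m)$ is larger by $d^{1/2}$, again giving $\psi(\m)=1/2$; and when both degenerations combine one retrieves the trivial $\psi(\m)=3/2$.

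The main obstacle will be the careful execution of Step 2: the Jacobi-symbol weights $\kappa_d(\gamma_1\gamma_2\gamma_3)$ must be carried through the linear diagonalisation of $c_0a^{2}+c_1ab+c_2b^{2}$ modulo $d$ with enough bookkeeping that the final character is exactly $\big(\tfrac{\delta(\m)H(\m)}{d}\big)$, and not some twist whose conductor is harder to pin down, for it is this cleanliness that makes $\theta(\m)=7/8$ available in the generic case. The restriction $(d,q\Delta_V N)=1$ and the decomposition $d=\delta d'$ preceding \eqref{eq:need1-pre} are tuned precisely so that ramified primes (those dividing $2\Delta_V$ or $\delta(\m)H(\m)$) are absorbed into the $O(\Xi^{3/2})$ prefactor of \eqref{eq:euro}; the extension from squarefree to general $d$ via multiplicativity is then routine.
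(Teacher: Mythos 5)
There is a genuine gap, and it lies exactly where you predicted the main obstacle would be: the identification of the character. Your two‑dimensional Gauss sum formula is wrong. For $(\gamma,d)=1$ the sum $\sum_{x,y\bmod d}e_d(\gamma(x^2+y^2)+ux+vy)$ is a product of two one‑dimensional Gauss sums, so the Legendre/Jacobi symbol $\bigl(\tfrac{\gamma}{d}\bigr)$ occurs \emph{squared} and disappears; the factor $\kappa_d(\gamma)$ is $\bigl(\tfrac{-1}{d}\bigr)$, independent of $\gamma$. This is precisely the point of the sums-of-two-squares pairing in \eqref{eq:special}: each linear form $L_i(\b)$ governs two variables, so no symbol $\bigl(\tfrac{\gamma_1\gamma_2\gamma_3}{d}\bigr)$ survives, and no factor $\bigl(\tfrac{H(\m)}{d}\bigr)$ can emerge downstream. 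The correct character is $\bigl(\tfrac{\delta(\m)}{\cdot}\bigr)$ alone (the paper's computation for squarefree $d$ gives $\cD_p^*(\m)=p^2\chi_p(-\delta(\m))+O(p)$ for $p\nmid H(\m)$, and the $\chi_p(-1)$ is absorbed by the prefactor $\chi(d)$ in $\Sigma(x)$). This is not cosmetic: your case division is governed by whether $\delta(\m)H(\m)$ is a square, whereas the lemma demands $\psi(\m)=0$, $\theta(\m)=\tfrac78$ whenever $\delta(\m)\neq\square$ and $H(\m)\neq 0$. For $\m$ with $\delta(\m)\neq\square$ and $H(\m)\neq0$ but $\delta(\m)H(\m)=\square$, your character trivialises and your argument only yields the weaker $\psi=\tfrac12$ bound, so the lemma as stated is not proved. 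A secondary bookkeeping error: the main term of $\cD_d(\m)$ has size $d^{2}$ (square‑root cancellation in both $a$ and $b$, the $b$‑sum being a Ramanujan sum), not $d^{5/2}$; with a genuine $d^{5/2}$ main term even the best case of the lemma would be unattainable.

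Modulo that correction, your route is close in spirit to the paper's but differs in the analytic endgame. The paper does not form an exact identity for squarefree $d$ and then apply Burgess to an incomplete sum; instead it proves the pointwise bound $\mathcal{D}_{p^r}^*(\m)\leq 4(r+1)p^{2r+\min\{r,v_p(\delta(\m))/2\}}$ together with the first‑order evaluation at $r=1$, packages everything into the Dirichlet series $\eta_M(s;\m)=L(s-2,\psi_\m)E_M(s)$ with $\psi_\m=\bigl(\tfrac{\delta(\m)}{\cdot}\bigr)$ of conductor $O(|\m|^4)$, and applies the truncated Perron formula, moving to $\sigma=\tfrac52+\ve$ in the non‑square case. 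This handles uniformly the non‑squarefree $d$, the $O(p)$ error at each prime and the coprimality restriction $(d,qN\Delta_V)=1$, all of which your sketch leaves to an unexamined ``routine'' multiplicativity step; note also that the cases $\psi(\m)=\tfrac12$ with $\delta(\m)\neq0$ need no character sum at all, since $(d,N)=1$ forces $(d,\delta(\m))=(d,\m)=1$ and Lemma \ref{lem:need2} already gives $\cD_d(\m)\ll d^{2+\ve}$.
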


Taking Lemma \ref{lem:need1} on faith for the moment, and appealing to
Lemma \ref{ubI_q2}, we therefore deduce that
\begin{align*}
J
&\ll B^\ve\left| \frac{B\m }{Dq}\right|^{-2}  
|\m|^{\theta(\m)} \left(\frac{D}{\delta}\right)^{\frac{5}{2}+\psi(\m)}  \left(
  \frac{\delta}{D}\right)^5.
\end{align*}
Inserting this into our expression for $V_{T,\a}(B,D)$ gives
$$
V_{T,\a}(B,D)\ll \frac{B^{2+\ve}}{D^{\frac{1}{2}}}
\sum_{0<|\m|\leq\sqrt{D}B^{\varepsilon}}\frac{|\m|^{\theta(\m)} D^{\psi(\m)}}{|\m|^2}
\sum_{q\ll B/\sqrt{D}}\frac{1}{q^4}
\sideset{}{'}\sum_{\substack{\delta \mid (q\Delta_V N)^\infty\\  \delta\leq D\\
(\delta,\Delta_V^\infty)\leq \Xi 
}}
\frac{|T_{\delta,q}(\m)|}{\delta^{\frac{5}{2}+\psi(\m)}}.
$$
Combining \cite[Lemma 9]{BM} with  \cite[Eq.\ (3.4)]{BM}, we see that
$$
\frac{|T_{\delta,q}(\m)|}{q^4} \ll \frac{|S_{\delta,q'}(\m)|}{{q'}^4},
$$
where $q'$ is the odd part of $q$. Hence we may restrict attention to
odd values of $q$ in the above estimate without loss of generality. 
Let us write $\delta=\delta_1\delta_2$ with $\delta_1\mid \Delta_V^\infty$ and $(\delta_2,\Delta_V)=1$. Similarly we write 
 $q=q_1q_2$ with $q_1\mid \Delta_V^\infty$ and $(q_2,\Delta_V)=1$. In particular $q_2$ is odd
 and we have 
$S_{\delta,q}(\m)=S_{\delta_1,q_1}(\m)S_{\delta_2,q_2}(\m)$ by \cite[Lemma  10]{BM}.
Combining Lemma~\ref{rho(d)-special}, with \cite[Eq.\ (4.1)]{BM} and  \cite[Lemma 25]{BM} we see that 
$S_{\delta_1,q_1}(\m)\ll \delta_1^{4+\ve}q_1^4$, whence
\begin{equation}\label{eq:goat}
\frac{S_{\delta_1,q_1}(\m)}{\delta_1^{\frac{5}{2}} q_1^4}\ll \Xi^{\frac{3}{2}} B^\ve.
\end{equation}
Note that there are $O(B^\ve)$ choices for $\delta_1$ and $q_1$ by \cite[Eq.\ (1.3)]{BM}.
Finally, appealing once more to \cite[Lemma 10]{BM}, we deduce there is a factorisation $\delta_2=\delta_{21}\delta_{22}$ and $q_2=q_{21}q_{22}$, with 
$$
\delta_{21}\mid N^\infty, \quad (\delta_{21},q_{21})=1, \quad \delta_{22}\mid q_{22}^\infty, \quad
(\delta_{21}q_{21},\delta_{22}q_{22})=1,
$$
such that 
\begin{equation}\label{eq:bridge}
S_{\delta_2,q_2}(\m)= \cD_{\delta_{21}}(\m)\cQ_{q_{21}}(\m)\cM_{\delta_{22},q_{22}}(\m).
\end{equation}
We have $(\delta_{21}\delta_{22}q_{21}q_{22},\Delta_V)=1$ here. We will need
good upper bounds for  these sums.

\begin{lemma}\label{lem:need2}
Assume that $(d,\Delta_V)=1$. Then we have 
$$
|\cD_{d}(\m)|
\leq 4^{\omega(d)}\tau(d)^2d^{2}
(d,\m)
(d, \delta(\m)).
$$
\end{lemma}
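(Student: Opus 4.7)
By the multiplicativity of $S_{d,1}(\m)$ in $d$ recorded in \cite[Lemma 10]{BM}, it suffices to prove the bound for $d=p^r$ with $p$ an odd prime and $p\nmid\Delta_V$. When $(p^r,\m)=p^r$ (in particular when $\m=\mathbf{0}$) one has $\cD_{p^r}(\m)=\rho(p^r)$ and the bound follows directly from Lemma \ref{rho(d)-special}, so we focus on $(p^r,\m)<p^r$. The plan is to detect the congruence conditions $p^r\mid Q_1(\k)$ and $p^r\mid Q_2(\k)$ by additive characters, evaluate the resulting Gauss sums explicitly, and then extract the factor $(p^r,\delta(\m))$ from the discriminant of a naturally arising quadratic polynomial in one remaining free variable.

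Opening up the congruences, one writes
$$
\cD_{p^r}(\m)=\frac{1}{p^{2r}}\sum_{a,b\bmod{p^r}}T_1(a,b)\,T_2(a,b)\,T_3(a,b),
$$
where each $T_j$ is a two-dimensional Gauss sum in $(x_{2j-1},x_{2j})$ with quadratic coefficient $c_1=a\al+b\be$, $c_2=a\al'+b\be'$, or $c_3=b\be''$ respectively, and linear terms $(m_{2j-1},m_{2j})$. Since $p\nmid\Delta_V$, the map $(a,b)\mapsto(c_1,c_2)$ is a bijection modulo $p^r$ and $c_3$ is a $\ZZ_p^\times$-linear combination of $c_1,c_2$. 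By the standard evaluation of a two-dimensional quadratic Gauss sum---magnitude $p^{r+v_p(c_j)}$, vanishing unless $p^{v_p(c_j)}\mid(m_{2j-1},m_{2j})$, with a phase $e_{p^r}(-(m_{2j-1}^2+m_{2j}^2)\overline{4c_j})$ in the principal range $(c_j,p)=1$---one substitutes $c_2=tc_1$ and sums over $c_1\in(\ZZ/p^r)^\times$ as a Ramanujan sum. The result is
$$
|\cD_{p^r}(\m)|\ll p^r\sum_{t\bmod{p^r}}(p^r,P(t))\;+\;(\text{degenerate terms}),
$$
where $P(t)$ is a quadratic polynomial in $t$ whose coefficients are $\ZZ_p$-linear forms in the quantities $M_j=m_{2j-1}^2+m_{2j}^2$, and a direct calculation using \eqref{eq:ci} and \eqref{eq:delta-given} identifies $\mathrm{disc}(P)=\delta(\m)/u^2$ for some unit $u\in\ZZ_p^\times$.

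It remains to bound $\sum_t(p^r,P(t))$ together with the degenerate terms. Writing $P=p^eQ$ with $p^e$ the content of $P$ and $Q$ a primitive quadratic with $\mathrm{disc}(Q)=\mathrm{disc}(P)/p^{2e}$, the classical estimate $\sum_{t\bmod{p^N}}(p^N,Q(t))\ll (N+1)\,p^N\,(p^N,\mathrm{disc}(Q))^{1/2}$ for a primitive polynomial of degree $2$ gives
$$
\sum_{t\bmod{p^r}}(p^r,P(t))\ll (r+1)\,p^r\,p^{v_p(\delta(\m))/2}\leq(r+1)\,p^r\,(p^r,\m)(p^r,\delta(\m)),
$$
the second inequality using $p^e\leq p^{v_p(\delta(\m))/2}$ (a consequence of $2e\leq v_p(\mathrm{disc}(P))$) combined with the trivial bound $(p^r,\m)\geq 1$, together with a more delicate argument when $v_p(\delta(\m))>2r$, where the surplus is supplied by $(p^r,\m)$ via the divisibility of $\m$. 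The degenerate contributions---Gauss sums with $v_p(c_j)>0$---are bounded analogously: the enhanced magnitude $p^{r+s}$ is compensated by the divisibility $p^s\mid(m_{2j-1},m_{2j})$, and summing over the relevant valuations yields the second factor of $(r+1)$ in the claimed $\tau(d)^2$. The main obstacle is the content bookkeeping in the case $p\equiv 1\bmod{4}$, where $p\mid M_j$ may occur without $p\mid\gcd(\m)$, so that the content of $P$ cannot be controlled by $(p^r,\m)$ directly and must instead be absorbed into $(p^r,\delta(\m))$ through the inequality $2e\leq v_p(\delta(\m))$.
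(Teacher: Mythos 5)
Your overall strategy is the one the paper uses: detect both congruences with additive characters, evaluate the resulting two--dimensional Gauss sums, reduce the sum over the character pair to a Ramanujan sum of a quadratic polynomial in one variable whose discriminant is (a unit multiple of) $\delta(\m)$, and invoke the Huxley--type bound $\rho_f(p^s)\leq 2p^{v_p(\operatorname{disc}f)/2}$ (the paper's Lemma~\ref{lem:rho}). Your non-degenerate case, the substitution $c_2=tc_1$ and the identification of $\operatorname{disc}(P)$ with $\delta(\m)$, is essentially the computation leading to \eqref{eq:D0} and \eqref{eq:quid}. However, there are two genuine gaps. First, your parametrisation only covers the pairs with $c_1$ a unit, and the ``degenerate terms'' with $p^j\mid c_i$ for some $i$ and $j\geq 1$ are not in fact ``bounded analogously'': the point is not merely that the enhanced Gauss-sum magnitude $p^{r+j}$ is compensated by $p^j\mid(m_{2i-1},m_{2i})$, but that these terms must \emph{also} carry the factor $(p^r,\delta(\m))$. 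For $1\leq j<r$ this requires re-extracting a shifted, rescaled quadratic (the paper's $r'_{\m}$) and checking its discriminant is still $\delta(\m)$; and in the extreme case $p^r\mid c_i$ the contribution is $p^{2r}c_{p^r}(\sigma(\m))$, of size $p^{3r}$ when $p^r\mid\sigma(\m)$, and one must prove that $p^r\mid\sigma(\m)$ together with $p^{2r}\mid m_1^2+m_2^2$ forces $p^{2r}\mid\delta(\m)$. That congruence identity is a real step in the paper's proof of Lemma~\ref{lem:mawkish} and is absent from your sketch; without it your bound fails for $\m$ with, say, $p^r\mid(m_1,m_2)$ and $p^r\mid\sigma(\m)$ but $(p,m_3m_5)=1$, where $(p^r,\m)$ may equal $1$.

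Second, your claimed inequality $p^{v_p(\delta(\m))/2}\leq (p^r,\m)(p^r,\delta(\m))$ in the regime $v_p(\delta(\m))>2r$, justified ``via the divisibility of $\m$'', is false as a principle: a high power of $p$ dividing $\delta(\m)$ (indeed $\delta(\m)=0$ is possible) does not force $p\mid\m$. The correct and much simpler fix is the trivial cap $\rho_f(p^s)\leq p^s$, which yields $p^{\min\{r,\,v_p(\delta(\m))/2\}}\leq (p^r,\delta(\m))$ with no appeal to $(p^r,\m)$ at all; this is exactly how the exponent $\min\{r,\tfrac{v_p(\delta(\m))}{2}\}$ arises in Lemma~\ref{lem:mawkish}. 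Relatedly, the factor $(d,\m)$ in the statement is not decorative: it is needed to absorb the imprimitive pairs $p^h\mid(a,b)$ (the paper's sum over $h\mid(d,\m)$ in \eqref{eq:seek}, contributing $h^4\cD^*_{d/h}(\m/h)$ with $h^2\leq (d,\m)\cdot h$ and $h(d/h,\delta(\m)/h^4)\leq(d,\delta(\m))$), not the terms you assign it to. Your $\m\equiv\0$ reduction to Lemma~\ref{rho(d)-special} also only gives $(1+r)^3p^{4r}$, which does not sit under the stated constant for $r\geq 4$; a direct treatment is needed there too.
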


The proof of this result is deferred to \S \ref{s:last}.
It implies that 
$\cD_{d}(\m)\ll 
 d^{3+\ve} 
(d,\m),
$
which recovers \cite[Lemma 22]{BM} in the present setting. 
An application of \cite[Lemma 25]{BM} yields 
$\cM_{d,q}(\m) \ll
d^{3+\ve} q^{4}$.
This is not fit for purpose when $\m$ is generic, although it does suffice for non-generic $\m$. The 
following result will be established in \S \ref{s:last'}.

\begin{lemma}\label{lem:need3}
Assume that $(d,\Delta_V)=1$. Then we have 
\begin{align*}
\cM_{d,q}(\m)\ll
 d^{2+\ve}q^{3+\ve}
 (d,\m)(q,\m)^2
(d,\delta(\m)) (q,Q_2^*(\m)).
\end{align*}
\end{lemma}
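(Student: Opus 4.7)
By the multiplicativity of $S_{d,q}(\m)$ recorded in \cite[Lemma 10]{BM}, it suffices to prove the bound when $d=p^{s}$ and $q=p^{r}$ are powers of a single odd prime $p\nmid\Delta_{V}$ with $r,s\geq1$; write $N=dq$. First I would detect both congruences $Q_{i}(\k)\equiv0\bmod{d}$ with additive characters, combining the character attached to $Q_{2}\equiv0$ with the $a$-sum via the substitution $A=a+c_{2}q$, so that
\[
\cM_{d,q}(\m)=\frac{1}{d^{2}}\sum_{\substack{A\bmod{N}\\(A,p)=1}}\sum_{c_{1}\bmod{d}}\sum_{\k\bmod{N}}e_{N}\bigl(AQ_{2}(\k)+c_{1}qQ_{1}(\k)+\m\cdot\k\bigr).
\]
The diagonal pair structure of $(Q_{1},Q_{2})$ splits the $\k$-sum into three $2$-dimensional Gauss sums on the blocks $(k_{1},k_{2}),(k_{3},k_{4}),(k_{5},k_{6})$, with leading coefficients $\lambda_{1}=A\beta+c_{1}q\alpha$, $\lambda_{2}=A\beta'+c_{1}q\alpha'$, $\lambda_{3}=A\beta''$. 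Each $\lambda_{j}$ is automatically a unit modulo $N$, since $(A,p)=(\alpha\alpha'\beta\beta'\beta'',p)=1$ while $p\mid q$; and for odd $p$ the square of a $1$-dimensional Gauss sum with unit coefficient equals $\pm p^{T}$ independently of that coefficient, so each $2$D Gauss sum evaluates explicitly as $\pm N\cdot e_{N}(-\overline{4\lambda_{j}}U_{j})$ with $U_{1}=m_{1}^{2}+m_{2}^{2}$, $U_{2}=m_{3}^{2}+m_{4}^{2}$, $U_{3}=m_{5}^{2}+m_{6}^{2}$.

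After this reduction the task is to bound
\[
\sum_{A,c_{1}}e_{N}\bigl(\Phi(A,c_{1})\bigr),\qquad\Phi=-\overline{4}\sum_{j=1}^{3}\overline{\lambda_{j}(A,c_{1})}\,U_{j},
\]
up to an overall prefactor $N^{3}/d^{2}$. My strategy is to expand each $\overline{\lambda_{j}}$ as a $p$-adic Taylor series in $\mu_{j}=c_{1}q\alpha_{j}\overline{A\beta_{j}}$, truncated whenever $\mu_{j}^{k}\equiv0\bmod{N}$. This recasts $\Phi$ as a polynomial in $c_{1}$ of degree $\lceil s/r\rceil$ whose constant term is $-\overline{4A\beta\beta'\beta''}\,Q_{2}^{*}(\m)$ and whose linear coefficient in $c_{1}$ is $\overline{4A^{2}\beta^{2}\beta'^{2}}\,q\,\tilde Q_{1}^{*}(\m)$, where $\tilde Q_{1}^{*}(\m):=\alpha\beta'^{2}(m_{1}^{2}+m_{2}^{2})+\alpha'\beta^{2}(m_{3}^{2}+m_{4}^{2})$. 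In the case $r\geq s$ the polynomial is linear, so the double sum collapses to two nested Ramanujan sums: summing over $c_{1}$ forces $d\mid\tilde Q_{1}^{*}(\m)$, and the ensuing Ramanujan sum in $\overline{A}$ is bounded by $(Q_{2}^{*}(\m),N)\leq(Q_{2}^{*}(\m),d)(Q_{2}^{*}(\m),q)$.

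In the case $r<s$ higher-order terms survive, and I would bound the resulting polynomial exponential sum $\sum_{c_{1}\bmod{p^{s}}}e_{p^{s}}(P(c_{1}))$ by the method of stationary phase/Hensel lifting, localising $c_{1}$ near the saddle points of $P'$ modulo appropriate powers of $p$; the gcd factors $(d,\m)$ and $(q,\m)^{2}$ emerge here from the $p$-adic valuations of the non-leading coefficients of $P$, which are explicit polynomials in the components of $\m$. The final algebraic input, which converts the divisibility extracted from the $c_{1}$-sum into the factor $(d,\delta(\m))$, is the identity that $p\mid Q_{2}^{*}(\m)$ together with $p\mid\tilde Q_{1}^{*}(\m)$ forces $p\mid c_{1}(\m)$, and hence $p\mid\delta(\m)=c_{1}(\m)^{2}-4c_{0}(\m)c_{2}(\m)$; this follows from a direct linear-algebra calculation exploiting the invertibility of $\alpha\alpha'\beta\beta'\beta''(\alpha\beta'-\alpha'\beta)$ modulo $p$. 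The main obstacle will be the case $r<s$, both in controlling the polynomial exponential sum cleanly and in the $p$-adic bookkeeping needed to produce precisely the advertised combination $(d,\m)(q,\m)^{2}(d,\delta(\m))(q,Q_{2}^{*}(\m))$.
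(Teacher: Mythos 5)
Your setup is sound and your treatment of the case $d\leq q$ (i.e.\ $r\geq s$ in your notation) is essentially correct: the substitution $A=a+c_2q$ is a valid bijection onto the units modulo $dq$, each $\lambda_j$ is indeed a unit because $p\mid q$, the geometric series for $\overline{\lambda_j}$ terminates at the linear term when $r\geq s$, and your algebraic identity does work — one checks that $\beta\beta'\,c_1(\m)=(\alpha\beta'+\alpha'\beta)\,Q_2^*(\m)-\beta''\,\tilde Q_1^*(\m)$, so $v_p(c_1)\geq\min\{v_p(Q_2^*),v_p(\tilde Q_1^*)\}$ and hence $[d\mid\tilde Q_1^*(\m)]\,(d,Q_2^*(\m))\leq(d,\delta(\m))$. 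This is a genuinely different (and rather clean) mechanism for producing the factor $(d,\delta(\m))$ than the one the paper uses.

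The gap is the case $d>q$, which you flag as ``the main obstacle'' but do not resolve, and which is not a corner case: in the application $d$ ranges up to $D$ while $q\ll B/\sqrt{D}$, so it is the dominant regime. Your plan there — bound $\sum_{c_1\bmod{p^s}}e_{p^s}(P(c_1))$ for a polynomial of degree $\lceil s/r\rceil$ by stationary phase and hope the factors $(d,\m)(q,\m)^2(d,\delta(\m))$ fall out of the valuations of its coefficients — is not a proof, and it is far from clear it can be made to yield exactly these gcds; in particular the exponent $2$ on $(q,\m)$ versus $1$ on $(d,\m)$ has to come from somewhere specific. The paper avoids the high-degree polynomial sum entirely by a different choice of coordinates: after extracting in turn the gcd of $\m$ with $q/r$ (from the Ramanujan sum in $a$), with $h=(\b,d)$, and with $u=(b_2,r)$ — which is precisely where $(d,\m)(q,\m)^2$ come from — one substitutes $b=b_1\overline{b_2}$, so that the phase becomes $-\overline{4b_2}\cdot g(b,1)^{-1}q_{\m}(b)$ with $q_\m$ the explicit quadratic \eqref{eq:quid}. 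The phase is then \emph{linear} in $\overline{b_2}$, the $b_2$-sum is a Ramanujan sum forcing $p^{\ell}\mid Q_2^*(\m'')$, and what remains is to count roots of a quadratic polynomial of discriminant $\delta(\m'')$ modulo $p^k$, which Huxley's bound (Lemma \ref{lem:rho}) controls by $p^{\min\{k,v_p(\delta(\m''))/2\}}$. That linearisation in one of the two unit variables is the key structural idea your proposal is missing; without it, your degree-$\lceil s/r\rceil$ exponential sum would need a separate, and substantially harder, analysis.
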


Returning to \eqref{eq:bridge}, 
we are now ready to deduce the estimate
\begin{equation}\label{eq:squash}
S_{\delta_2,q_2}(\m) \ll 
 \delta_2^{2+\ve}q_2^{3+\ve}
 (\delta_2,\m)(q_2,\m)^2
(\delta_2,\delta(\m)) (q_2,Q_2^*(\m)),
\end{equation}
if $\delta(\m)H(\m)Q_2^*(\m)\neq 0$, and 
\begin{equation}\label{eq:for-hyp}
S_{\delta_2,q_2}(\m) \ll 
\delta_2^{3+\ve}q_2^{4}(\delta_2,\m),
\end{equation}
in general. 
The latter follows easily from combining Lemma \ref{lem:need2} with \cite[Eq.\ (4.1)]{BM} and  \cite[Lemma 25]{BM}.
For the former  we act similarly but substitute Lemma \ref{lem:need3} for \cite[Lemma 25]{BM} and \cite[Lemma 15]{BM}
 for  \cite[Eq.\ (4.1)]{BM}.

We proceed to partition 
$\ZZ^6$ into a disjoint union of four sets. 
Let 
$\cM_1$ denote the set of $\m\in \ZZ^6$ such that 
$\delta(\m)\neq \square$ and $H(\m)Q_2^*(\m)\neq 0$. 
Likewise, let 
$\cM_2$ denote the set of $\m$ for which  
$\delta(\m)=\square$ or $Q_2^*(\m)=0$ but $H(\m)\neq 0$. 
Let $\cM_3$  
 denote the set of  $\m$ such that 
$\delta(\m)Q_2^*(\m)\neq 0$ and $H(\m)= 0$.
Finally,  let  $\cM_4$ be the set of $\m$ such that 
$\delta(\m)Q_2^*(\m)=H(\m)= 0$. We will need the following result.

\begin{lemma}\label{lem:bad-m}
Let $M\geq 1$ and let $\ve>0$. Then we have
$$
\#\{
\m\in \cM_i: |\m|\leq M\} =
\begin{cases}
O(M^{4+\ve}),
 & \mbox{if $i=2$ or $3$},\\
O(M^{2+\ve}),
 & \mbox{if $i=4$}.
 \end{cases}
$$
Furthermore, for any $A\in \ZZ$, we have 
$$
\#
\{\m\in \ZZ^6: |\m|\leq M, ~ \delta(\m)=A\} =O((1+|A|)^\ve M^{2+\ve}).
$$
\end{lemma}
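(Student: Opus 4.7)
The plan rests on the algebraic identity
$$
\delta(\m) = \sigma(\m)^2 - \kappa\, (m_1^2+m_2^2)(m_5^2+m_6^2), \qquad \kappa = 4\al'\be''(\al\be'-\al'\be),
$$
obtained by expanding $c_1^2-4c_0c_2$ directly from \eqref{eq:ci} and \eqref{eq:sigma}: all cross terms in $\sigma^2$ absorb into $-4c_0c_2$ save for the one displayed, and $\kappa\neq 0$ by hypothesis. This identity, which presents $\delta$ as a square minus a highly structured correction, is the essential input for every remaining step.

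I would first prove the auxiliary bound. Setting $u=m_1^2+m_2^2$, $v=m_3^2+m_4^2$, $w=m_5^2+m_6^2$, and $n=\sigma(\m)$, the equation $\delta(\m)=A$ becomes $n^2-\kappa uw = A$ with $|n|\ll M^2$. For each fixed $n$ the product $uw = (n^2-A)/\kappa$ is pinned down, and the divisor bound yields $\ll (1+|n^2-A|)^{\ve}$ factorisations $uw = u \cdot w$; each contributes $r(u)r(w)\ll M^{\ve}$ tuples $(m_1,m_2,m_5,m_6)$. Once $u,w,n$ are known, the defining expression for $\sigma$ determines $v$, whence $(m_3,m_4)$ contributes at most $r(v)\ll M^{\ve}$. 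Summing over $|n|\ll M^2$ yields $\ll (1+|A|)^{\ve} M^{2+\ve}$.

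To bound $\cM_2$ I would split it as $\{c_2=0,\,H\neq 0\}\cup\{\delta=\square,\,H\neq 0\}$. In the first piece the relation $c_2(\m)=0$ is linear in $(u,v,w)$, so fixing $(m_1,\ldots,m_4)$ freely in $O(M^4)$ ways determines $w$ and yields $r(w)\ll M^{\ve}$ choices of $(m_5,m_6)$. In the second piece, summing the auxiliary bound over $A=k^2$ with $|k|\ll M^2$ produces $O(M^{4+\ve})$. The bound on $\cM_3$ is immediate: since $\cM_3\subset\{H=0\}$, one of the three coordinate pairs must vanish, leaving at most $\ll M^4$ lattice points trivially.

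The real work is in $\cM_4$. Here I would split into the three subcases $u=0$, $v=0$, $w=0$ and show in each that $\delta=0$ or $c_2=0$ collapses to a linear relation among the two surviving sum-of-squares variables. When $u=0$ or $w=0$ the identity at the start gives $\delta=\sigma^2$, and $\sigma$ itself is linear in $(u,v,w)$; when $v=0$ the same identity instead reads $\delta|_{v=0}=(\al'\be'' u - (\al\be'-\al'\be) w)^2$, still a perfect square, via $(p+q)^2-4pq=(p-q)^2$ applied to $p=\al'\be'' u$, $q=(\al\be'-\al'\be)w$. In every case $c_2$ restricted to the vanishing locus is also a product of linear factors in $u,v,w$. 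A free non-vanishing coordinate pair gives $O(M^2)$, the ratio fixed by the linear relation pins down the third sum of squares, and the $r$-function contributes $O(M^{\ve})$, so each subcase produces $O(M^{2+\ve})$. The subtle point, and the main obstacle, is the subcase $v=0$: a priori $\delta|_{v=0}$ appears to be a non-degenerate binary quadratic form in $u,w$ with a five-dimensional family of solutions, and only the fortuitous factorisation of $\delta$ collapses this quadric onto a line and saves the $O(M^{2+\ve})$ bound.
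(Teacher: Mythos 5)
Your proof is correct; the identity $\delta(\m)=\sigma(\m)^2-4\al'\be''(\al\be'-\al'\be)(m_1^2+m_2^2)(m_5^2+m_6^2)$ does check out against the definitions of $c_0,c_1,c_2$ and $\sigma$, and the counting mechanism is essentially the paper's: present $\delta$ as a square minus a product of two quantities of size $O(M^2)$, fix the square, apply the divisor bound to the product, and recover $\m$ with $r(\cdot)\ll M^\ve$ losses. The paper reaches the same structure without your identity by setting $X=c_1$, $Y=c_0$, $Z=c_2$ and counting solutions of $X^2-4YZ=A$ with $X,Y,Z\ll M^2$, observing that each triple $(X,Y,Z)$ has $O(M^\ve)$ preimages $\m$. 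Two remarks. First, your auxiliary bound as written elides the case $n^2=A$: there $uw=0$, the divisor bound gives nothing, and there are $\gg M^2$ pairs $(u,w)$ with $uw=0$; but at most two values of $n$ occur, and their contribution (one of $u,w$ vanishing, the other coordinate pair free, the third sum of squares pinned down by $\sigma=n$) is $O(M^{2+\ve})$ --- the paper treats the analogous degenerate case $X^2=A$ separately, and you should too. Second, for $\cM_4$ the paper simply bounds the count by $O(M^{2+\ve})$ plus the $A=0$ instance of the auxiliary bound, whereas you redo this case via the observation that $\delta$ restricted to each of $u=0$, $v=0$, $w=0$ is the square of a linear form in the surviving sums of squares; this is a pleasant structural fact but more work than needed, and your closing worry about a large solution family when $v=0$ is unfounded in any event, since a nonzero binary quadratic form in $(u,w)$ vanishes only on at most two lines through the origin, which already yields $O(M^{2+\ve})$.
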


\begin{proof}
Let us write $R_i(M)$ for the quantity on the  left hand side in the first displayed equation and $R(A;M)$ for the quantity in the second displayed equation.
Note that $H(\m)=0$ if and only if $m_i=m_j=0$ for some $(i,j)\in \{(1,2),(3,4),(5,6)\}$.
In particular the bound for $R_3(M)$ is trivial. 
Likewise it is easy to see that there are $O(M^{4+\ve})$ choices of $|\m|\leq M$ for which $Q_2^*(\m)=0$ and 
$O(M^{2+\ve})$ choices of $|\m|\leq M$ for which $Q_2^*(\m)=H(\m)=0$.

To handle the contributions  $\delta(\m)$, it will be convenient to make the change of variables
$X=c_1(\m)$, $Y=c_0(\m)$ and $Z=c_2(\m)$, in the notation of \eqref{eq:ci}. 
In particular we have $X,Y,Z\ll M^2$.
Using the familiar estimate $r(m)=O(m^\ve)$
and recalling that $\Delta_V\neq 0$, 
we see that there are $O(M^\ve)$ choices of $\m$ associated to a given triple  $X,Y,Z$. 
We begin by estimating 
$$
R(A;M)
\ll M^\ve
\#\{X,Y,Z\ll M^2: X^2-4YZ=A\}.
$$
For a fixed $X$ with  $X^2-A\neq 0$, the trivial estimate for the divisor function reveals that there are $O((1+|A|)^\ve M^\ve)$ choices for $Y,Z$. For $X$  with $X^2-A=0$  there are $O(M^2)$ choices for $Y,Z$ such that $YZ=0$. Combining these contributions  therefore shows that $R(A;M)=O((1+|A|)^\ve M^{2+\ve})$, as claimed.

Turning to the estimation of $R_2(M)$ and $R_4(M)$, we first observe that
$$
R_2(M)\ll M^{4+\ve}+M^\ve\#\{W,X,Y,Z\ll M^2: X^2-4YZ=W^2\} \ll M^{4+\ve}.
$$
Finally, we  have 
$
R_4(M)\ll M^{2+\ve}+M^\ve R(0,M)\ll  M^{2+\ve},
$
which   completes the proof.
\end{proof}

Let us write $V_i$ for the overall contribution to 
$V_{T,\a}(B,D)$ from $\m\in \cM_i$, for $1\leq i\leq 4$.
We relabel $\delta_2$ to be $\delta$ and $q_2$ to be $q$.
Beginning with the contribution from generic $\m$, an application of \eqref{eq:goat} and \eqref{eq:squash} yields
\begin{align*}
V_1
\ll~& \frac{\Xi^{\frac{3}{2}} B^{2+\ve}}{D^{\frac{1}{2}}} 
\sum_{\substack{
\m\in \cM_1\\
|\m|\leq\sqrt{D}B^{\varepsilon}}}
|\m|^{-\frac{9}{8}} 
\sum_{q\ll B/\sqrt{D}}\frac{(q,\m)^2(q,Q_2^*(\m))}{q}
\sum_{\substack{\delta \mid (qN)^\infty\\  \delta\leq D}}
\frac{(\delta,\m)  (\delta,\delta(\m))}{\delta^{\frac{1}{2}}}.
\end{align*}
The inner sum is 
\begin{align*}
\sum_{\substack{\delta \mid (qN)^\infty\\  \delta\leq D}}
\frac{(\delta,\m)  (\delta,\delta(\m))}{\delta^{\frac{1}{2}}}
&\ll B^\ve \max_{\delta\leq D} \left\{(\delta,\m)  (\delta,\delta(\m))^{\frac{1}{2}}\right\}.
\end{align*}
Extracting  the greatest common divisor $h$ of $m_1,\ldots,m_6$ and rearranging our expression, we therefore obtain
\begin{align*}
V_1
\ll~& \frac{\Xi^{\frac{3}{2}} B^{2+\ve}}{D^{\frac{1}{2}}} 
\max_{\delta\leq D} 
\sum_{h\leq \sqrt{D}B^{\varepsilon}} 
\hspace{-0.1cm}
\frac{(\delta,h)}{h^{\frac{9}{8}}}
\hspace{-0.2cm}
\sum_{\substack{
\m\in \cM_1\\
|\m|\leq h^{-1}\sqrt{D}B^{\varepsilon}}}
\hspace{-0.3cm}
\frac{(\delta,\delta(h\m))^{\frac{1}{2}}}{|\m|^{\frac{9}{8}} 
}
\sum_{q\ll B/\sqrt{D}}
\hspace{-0.3cm}
\frac{(q,h)^2(q,Q_2^*(h\m))}{q}.
\end{align*}
The inner sum over $q$ is clearly $O(h^2
B^\ve)$, whence
$$
V_1
\ll \frac{\Xi^{\frac{3}{2}} B^{2+\ve}}{D^{\frac{1}{2}}} 
\max_{\delta\leq D} 
\sum_{h\leq \sqrt{D}B^{\varepsilon}} 
\hspace{-0.1cm}
\frac{h^2(\delta,h)^{3}}{h^{\frac{9}{8}}}
\sum_{\substack{
\m\in \cM_1\\
|\m|\leq h^{-1}\sqrt{D}B^{\varepsilon}}}
\frac{(\delta,\delta(\m))^{\frac{1}{2}}}{|\m|^{\frac{9}{8}} 
},
$$
since $(\delta,\delta(h\m))\leq (\delta,h)^4(\delta,\delta(\m))$.

For a fixed integer $\delta\leq D$ and a choice of $M$ with
$0<M\leq h^{-1}\sqrt{D}B^{\ve}$, we deduce from Lemma 
\ref{lem:bad-m} that 
\begin{align*}
\sum_{\substack{M<|\m|\leq 2M\\
\delta(\m)\neq 0}}
(\delta,\delta(\m))
&\leq 
\sum_{0<|A|\ll M^4} 
(\delta, A)
R(A;M)\\
&\ll M^{2+\ve}
\sum_{0<|A|\ll M^4} 
(\delta, A)\\
&\ll \delta^\ve M^{6+\ve}.
\end{align*}
Armed with this we conclude that 
\begin{align*}
V_1
&\ll \frac{\Xi^{\frac{3}{2}} B^{2+\ve}}{D^{\frac{1}{2}}} 
\max_{\delta\leq D}
\sum_{h\leq \sqrt{D}B^{\varepsilon}} 
\frac{h^2(\delta,h)^{3}}{h^{\frac{9}{8}}}
\left(\frac{\sqrt{D}}{h}\right)^{6-\frac{9}{8}}
\ll \Xi^{\frac{3}{2}} B^{2+\ve} D^{2-\frac{1}{16}+\ve}.
\end{align*}
This is  $O(\Xi^{\frac{3}{2}} B^{4-\frac{1}{16}+\ve})$, since 
$D\ll B$, which is satisfactory for \eqref{eq:euro}.

It is now time to consider the contribution from  non-generic $\m$.
Invoking our  estimate for $S_{\delta,q}(\m)$ in \eqref{eq:for-hyp}
we find that 
\begin{align*}
V_i
\ll \frac{\Xi^{\frac{3}{2}} B^{3+\ve}}{D} 
\max_{\delta\leq D}
\sum_{\substack{
\m\in \cM_i\\
0<|\m|\leq\sqrt{D}B^{\varepsilon}}}
|\m|^{-2} D^{\psi(\m)}
\frac{\delta^{\frac{1}{2}}(\delta,\m)}{\delta^{\psi(\m)}},
\end{align*}
for $2\leq i\leq 4$.
Suppose first that $i\in \{2,3\}$, so that $\psi(\m)\leq \frac{1}{2}$. Then 
Lemma \ref{lem:bad-m} implies that
\begin{align*}
V_i
&\ll \frac{\Xi^{\frac{3}{2}} B^{3+\ve}}{D^{\frac{1}{2}}} 
\max_{\delta\leq D}
\sum_{\substack{\m\in \cM_i\\
|\m|\leq\sqrt{D}B^{\varepsilon}}}
|\m|^{-2} 
(\delta,\m)\\
&\ll
\frac{\Xi^{\frac{3}{2}} B^{3+\ve}}{D^{\frac{1}{2}}} 
\max_{\delta\leq D}
 \sum_{\ell \mid \delta} \ell
\sum_{\substack{\m\in \cM_2\\
|\m|\leq\sqrt{D}B^{\varepsilon}\\
\ell\mid \m
}}
|\m|^{-2}\\
&\ll \frac{\Xi^{\frac{3}{2}} B^{3+\ve}}{D^{\frac{1}{2}}}  
\left(\sqrt{D}B^\ve\right)^{2+\ve}\\
&\ll  \Xi^{\frac{3}{2}} B^{4-\frac{1}{2}+\ve},
\end{align*}
since $D\ll B$.
This is 
 satisfactory for \eqref{eq:euro}.

Finally, when $i=4$, so that $\psi(\m)\leq \frac{3}{2}$,
Lemma \ref{lem:bad-m} yields
\begin{align*}
V_4 &\ll \Xi^{\frac{3}{2}} B^{3+\ve} D^{\frac{1}{2}} 
\sum_{\substack{\m\in \cM_4\\
|\m|\leq\sqrt{D}B^{\varepsilon}}}
|\m|^{-2} 
\ll 
\Xi^{\frac{3}{2}} B^{4-\frac{1}{2}+\ve},
\end{align*}
since $D\ll B$.  This too is satisfactory for \eqref{eq:euro} and thereby 
concludes our treatment of the contribution from non-zero $\m$ 
to $S_{T,\a}^\sharp(B)$.

Our analogue of \cite[Eq.\ (8.1)]{BM} is now
$$
S(B)= 
M^\sharp(B)+
O(\Xi^{-\frac{1}{6}}B^{4+\ve}+\Xi B^{3+\ve}
+
\Xi^{\frac{3}{2}} B^{4-\frac{1}{16}+\ve}),
$$
where $M^\sharp (B)$ is given by 
\cite[Eq.\ (8.2)]{BM} with $n=6$.
To handle the remaining term we may  invoke \cite[Lemma 30]{BM}.
Taking $\Xi=B^{\frac{3}{80}}$ we obtain the final error term $O(B^{4-\frac{1}{160}+\ve})$ in our asymptotic formula for $S(B)$.
This therefore completes the proof of 
Theorem \ref{th2}, subject to the exponential sum estimates Lemmas \ref{lem:need1}--\ref{lem:need3}.

\section{Analysis of $\mathcal{D}_d(\m)$}\label{s:last}

In this section we  establish Lemmas \ref{lem:need1} and \ref{lem:need2},  for which we will need to undertake a detailed analysis of the sum $\cD_{d}(\m)$  when  $Q_{1}, Q_{2}$ are given by 
\eqref{eq:special} and $(d,\Delta_V)=1$, with 
$\Delta_{V}=2\al \al' \be \be' \be'' (\al\be'-\al'\be)$.
Before launching into this endeavour let us
 record  a  preliminary result concerning the quantity 
$$
\rho_{f}(p^r)=\#\{ 
n\bmod{p^r}: f(n)\equiv 0\bmod{p^r}\},
$$
for any quadratic polynomial $f$ defined over $\ZZ$ and any prime power $p^r$.

\begin{lemma}\label{lem:rho}
Let $r\geq 1$ and let $f(x)=c_{0}x^{2}+c_{1}x+c_{2}$ for $c_{0},c_{1},c_{2}\in \ZZ$.
Then we have
$$
\rho_{f}(p^{r})\leq 2p^{\frac{v_p(c_{1}^{2}-4c_{0}c_{2})}{2}}.
$$
\end{lemma}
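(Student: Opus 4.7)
The plan is to use the algebraic identity
$$
4 c_{0} f(x) = (2 c_{0} x + c_{1})^{2} - D, \qquad D = c_{1}^{2} - 4 c_{0} c_{2},
$$
to reduce the counting of roots of $f$ modulo $p^{r}$ to counting square roots of $D$ modulo a power of $p$, combined with a preliminary reduction and some case analysis.

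First, a simple induction handles the case that $p$ divides $\gcd(c_{0}, c_{1}, c_{2})$: if $p^{k}$ exactly divides this gcd, write $f = p^{k} h$, so that $\rho_{f}(p^{r}) = p^{k} \rho_{h}(p^{r-k})$ and $v_{p}(D) = 2k + v_{p}(D_{h})$, and the bound for $h$ propagates to $f$. I may therefore assume $\gcd(c_{0}, c_{1}, c_{2}, p) = 1$. If in addition $p \mid c_{0}$ then either $p \nmid c_{1}$, in which case Hensel's lemma applies to the unit derivative $f'(x) \equiv c_{1} \bmod{p}$ and gives $\rho_{f}(p^{r}) \leq 1$, or $p \mid c_{1}$ and $p \nmid c_{2}$, whence $f \equiv c_{2} \not\equiv 0 \bmod{p}$ and $\rho_{f}(p^{r}) = 0$. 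Both cases are trivially within the claimed bound.

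The main case is therefore $p \nmid c_{0}$. For odd $p$, $4 c_{0}$ is a unit modulo $p^{r}$, so $f(x) \equiv 0 \bmod{p^{r}}$ is equivalent to $(2 c_{0} x + c_{1})^{2} \equiv D \bmod{p^{r}}$, and $x \mapsto 2 c_{0} x + c_{1}$ is a bijection on $\ZZ/p^{r} \ZZ$. The count thus equals the number of square roots of $D$ modulo $p^{r}$, which is at most $2 p^{v_{p}(D)/2}$ by the standard analysis of lifting a square-root equation $p$-adically. For $p = 2$ with $c_{0}$ odd I would split on the parity of $c_{1}$: if $c_{1}$ is odd then $D$ is odd and $f'$ is a unit modulo $2$, so Hensel gives $\rho_{f}(2^{r}) \leq 2$; if $c_{1}$ is even, the bijection $x \mapsto x - c_{1}/2$ on $\ZZ/2^{r}\ZZ$ reduces $f$ to $g(x) = x^{2} + c_{2}'$ with discriminant $-4 c_{2}' = D$, so that $v_{2}(c_{2}') = v_{2}(D) - 2$.

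The main technical point lies in this final case, where the number of square roots of $-c_{2}'$ modulo $2^{r}$ carries an extra factor of $4$ (as opposed to the factor $2$ available for odd primes). This is precisely offset by the drop of $2$ in the valuation produced by completing the square: the resulting bound $2^{v_{2}(c_{2}')/2 + 2} = 2 \cdot 2^{v_{2}(D)/2}$ matches the desired estimate, so the uniform constant $2$ in front of $p^{v_{p}(D)/2}$ is sharp across all primes.
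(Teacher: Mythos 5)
Your proposal is correct in substance, but it takes a genuinely different route at the key step. Both arguments begin with the same reduction: extract the power of $p$ dividing the content $(c_0,c_1,c_2)$ and observe that this costs a factor $p^{\ell}$ in the root count while dropping the discriminant valuation by $2\ell$. (One small omission on your side: you should also dispose of the trivial case $r\leq \ell$, where $f\equiv 0\bmod{p^r}$ identically and $\rho_f(p^r)=p^r\leq p^{v_p(D)/2}$; the paper treats this explicitly.) After that the paper simply cites a result of Huxley for primitive quadratic congruences, whereas you prove the primitive case from scratch by completing the square and counting square roots of $D$ $p$-adically. Your version is more self-contained and makes transparent where the constant $2$ comes from (the $4$ square roots of a unit modulo $2^s$ being offset by the drop $v_2(D/4)=v_2(D)-2$), at the cost of some case analysis that the citation hides. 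One detail to fix in the $p=2$, $c_1$ even case: the literal substitution $x\mapsto x-c_1/2$ only normalises $f$ when $c_0=1$; for general odd $c_0$ you should instead use the identity $4c_0f(x)=(2c_0x+c_1)^2-D$ directly, noting that $f(x)\equiv 0\bmod{2^r}$ is equivalent to $(c_0x+c_1/2)^2\equiv D/4\bmod{2^r}$ and that $x\mapsto c_0x+c_1/2$ is a bijection on $\ZZ/2^r\ZZ$. With these two patches the argument is complete and yields exactly the stated bound.
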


\begin{proof}
Let $\mu=v_p(c_{1}^{2}-4c_{0}c_{2})$ and let  $p^\ell\| (c_{0},c_{1},c_{2})$.
Writing $c_i=p^\ell c_i'$, for $0\leq i\leq 2$, we
denote by $f'$ the quadratic polynomial with coefficients $c_0',c_1',c_2'$.  
In particular we have  $\mu'=v_p(c_1'^2-4c_0'c_2')=\mu-2\ell$.
If $r\leq \ell$ then it is clear that $\rho_f(p^r)=p^r\leq p^{ \frac{\mu}{2}}$, which  is  satisfactory.
Alternatively, if $r>\ell$, then
$\rho_f(p^r)=p^\ell \rho_{f'}(p^{r-\ell}).$
The content of $f'$ is now coprime to $p$ and it therefore follows from work   of 
Huxley \cite{hux} that
$
\rho_{f'}(p^{r-\ell}) \leq 2p^{\frac{\mu'}{2}}.
$
But then 
$
\rho_f (p^r)
\leq 2 p^{\frac{\mu}{2}}
$
when  $r>\ell$, as required to  complete the proof of the lemma.
\end{proof}

We base  our analysis of $\cD_{d}(\m)$
 on the initial steps after the statement of \cite[Lemma 20]{BM}. 
Let 
\begin{equation}\label{eq:psy}
\cD_d(\m;\b) = \sum_{\k\bmod{d}} e_d\left(b_1Q_1(\k)+b_2Q_2(\k)+\m.\k\right).
\end{equation}
Extracting the  greatest common divisor
 between $d$ and $\b$, as there, we conclude that 
\begin{equation}
  \label{eq:seek}
\begin{split}
\mathcal D_{d}(\ma{m})&=
\sum_{\substack{h\mid (d, \m)}} h^4 \cdot \frac{1}{d'^2}
\hspace{0.2cm}
\sideset{}{^{*}}\sum_{\b\bmod{d'}} 
\mathcal D_{d'}(\ma{m}';\b)
=
\sum_{\substack{h\mid (d,  \m)}} h^4 \cdot 
\mathcal D_{d'}^*(\ma{m}'),
\end{split}
\end{equation}
say, with $d=hd'$ and $\m=h\m'$. 
By  multiplicativity  it suffices to analyse 
$\mathcal D_{d'}^*(\ma{m}')$  when $d'=p^{r}$ for $r\geq 1$ and  $p\nmid \Delta_{V}$.  Let 
\begin{equation}\label{eq:LL'L''}
\begin{split}
L(x,y)&=L_{1}(x,y)=L_{2}(x,y)=\al x+ \be y, \\
L'(x,y)&=L_{3}(x,y)=L_{4}(x,y)=\al' x+ \be' y, \\
L''(x,y)&=L_{5}(x,y)=L_{6}(x,y)= \be'' y.
\end{split}
\end{equation}
We will write $g(x,y)=L(x,y)L'(x,y)L''(x,y)$. 
Then we are led to consider
$$
\cD_{p^{r}}(\m';\b) = \prod_{i=1}^{6} \sum_{k\bmod{p^{r}}}
e_{p^{r}}\left( L_{i}(\b)k^{2}+m_{i}'
k\right).
$$
We would like to employ the explicit formulae for  Gauss sums to evaluate these sums, which we proceed to recall. 
Let $p$ be a prime with $p\nmid 2a$. Then we have 
\begin{equation}\label{eq:gauss}
\sum_{k \bmod{p^r}}e_{p^r}(a k^2+m k)=p^{\frac{r}{2}} e_{p^r}(-\overline{4a}m^2)
\times \begin{cases}1, &\mbox{if $r$ is even,}\\
\chi_p(a)\varepsilon(p), &\mbox{if $r$ is odd.}
\end{cases}
\end{equation}
Here $\chi_p(\cdot)=(\frac{\cdot}{p})$ is the Legendre symbol and $\ve(p)=1$ or $i$ according to whether $p$ is congruent to $1$ or $3$ modulo $4$, respectively. We will also need the following evaluation of the Ramanujan sum
\begin{equation}\label{eq:ram}
c_{p^r}(b)=
\sideset{}{^{*}}\sum_{k\bmod{p^r}} e_{p^r}(bk) =\begin{cases}
\phi(p^r), & \mbox{if $p^r\mid b$,}\\
-p^{r-1}, & \mbox{if $p^{r-1}\| b$,}\\
0, & \mbox{otherwise}.
\end{cases}
\end{equation}
Armed with these facts we be able to prove the following result.

\begin{lemma}\label{lem:mawkish}
Let $p\nmid \Delta_V$ and let $\m \in \ZZ^6$. Then we have
$$
\mathcal{D}_{p^r}^*(\m)
\leq 4(r+1)p^{2r+\min\{r,\frac{v_p(\delta(\m))}{2}\}}.
$$
Moreover, when $r=1$ and $p\nmid H(\m)$, we have 
$$
\cD_{p}^*(\m)=p^2 \chi_{p}(-\delta(\m))  +O(p).
$$
\end{lemma}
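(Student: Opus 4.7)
The plan is to start from \eqref{eq:psy}, which yields
\[
\cD_{p^r}^*(\m) = p^{-2r}\sideset{}{^*}\sum_{\b \bmod p^r}\prod_{i=1}^{6} G_i(\b),\qquad G_i(\b) = \sum_{k \bmod p^r} e_{p^r}(L_i(\b) k^2 + m_i k).
\]
I would change variables from $\b$ to $(u, v) := (L(\b), L'(\b))$, which is a bijection on $(\ZZ/p^r\ZZ)^2$ since $p \nmid D := \al\be' - \al'\be$. Under this change, $w := L''(\b) = \la v + \mu u$ with $\la = \al\be''/D$ and $\mu = -\al'\be''/D$ both units modulo $p$, and the primitivity of $\b$ translates into $\min(v_p(u), v_p(v)) = 0$. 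Completion of the square combined with \eqref{eq:gauss} then shows that $G(a, m) = \sum_k e_{p^r}(ak^2 + mk)$ vanishes unless $p^{v_p(a)} \mid m$, in which case $|G(a, m)| = p^{(r + v_p(a))/2}$.

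For the asymptotic statement with $r = 1$ and $p \nmid H(\m)$, the hypothesis forces $v_p(u) = v_p(v) = v_p(w) = 0$, since any positive valuation would require $p \mid m_i^2 + m_j^2$ for one of the three pairs, contradicting $p \nmid ABC$ with $A = m_1^2 + m_2^2$, $B = m_3^2 + m_4^2$, $C = m_5^2 + m_6^2$. Applying \eqref{eq:gauss} six times and using $\varepsilon(p)^6 = \chi_p(-1)$ together with $\chi_p(uvw)^2 = 1$, I obtain
\[
\cD_p^*(\m) = p\,\chi_p(-1)\sum_{\substack{u, v\bmod p \\ uvw\neq 0}} e_p\bigl(-\overline{4}\,\Phi(u, v)\bigr),\qquad \Phi(u, v) = \frac{A}{u} + \frac{B}{v} + \frac{C}{w}.
\]
Substituting $u = 1/s$, $v = 1/t$ and setting $\eta = \la s + \mu t$, a direct calculation gives the identity $\la\,\Phi = A\eta + \kappa t - C\mu t^2/\eta$ where $\kappa := -A\mu + B\la + C$. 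I would sum first over $t$ using \eqref{eq:gauss} (producing a quadratic Gauss sum in $t$ with a phase linear in $\eta$), then over $\eta$ via the Legendre-symbol Gauss sum $\tau(\chi_p) = \varepsilon(p)\sqrt p$. The main term emerges as $p^2 \chi_p(-1)\chi_p(\kappa^2 + 4AC\mu)$, which I convert to $p^2\chi_p(-\delta(\m))$ via the algebraic identity $D^2(\kappa^2 + 4AC\mu) = \delta(\m)$ (verified by directly expanding $c_1^2 - 4c_0 c_2$ using \eqref{eq:ci} and the explicit formulas for $\la, \mu$); the correction terms from $t = 0$, $\eta = 0$, and the excluded line $\eta = \mu t$ are each $O(p)$ via \eqref{eq:ram}.

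For the upper bound at general $r$, I would partition the $(u, v)$-sum by the valuation triple $(\al, \be, \ga) = (v_p(u), v_p(v), v_p(w))$. Primitivity excludes $\min(\al, \be) > 0$, and the relation $w = \la v + \mu u$ forces $\ga = 0$ whenever $\max(\al, \be) > 0$; hence only $O(r)$ triples arise. For each triple, $\bigl|\prod_i G_i(\b)\bigr| \leq p^{3r + \al + \be + \ga}$ holds contingent on the divisibility conditions $p^\al \mid \gcd(m_1, m_2)$, $p^\be \mid \gcd(m_3, m_4)$, $p^\ga \mid \gcd(m_5, m_6)$; violation of any zeros the product. The trivial count $O(p^{2r - \al - \be - \ga})$ of primitive $(u, v)$ in each valuation class gives an overall bound of $O((r+1)\,p^{3r})$. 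The sharper exponent $2r + \min\{r, v_p(\delta(\m))/2\}$ arises from additional cancellation in the $(u, v)$-sum within the non-degenerate case, which reduces matters to counting roots of the auxiliary quadratic polynomial $f(t) = c_0 t^2 + c_1 t + c_2$ modulo $p^r$; Lemma \ref{lem:rho} bounds this count by $2 p^{v_p(\delta(\m))/2}$, since the discriminant of $f$ is precisely $\delta(\m)$.

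The principal obstacles will be: (i) verifying the algebraic identity $D^2(\kappa^2 + 4AC\mu) = \delta(\m)$, which while elementary is a lengthy expansion with many cross-terms to reconcile against the explicit $c_0, c_1, c_2$ from \eqref{eq:ci}; and (ii) organizing the general-$r$ case analysis so that Lemma \ref{lem:rho} can be brought to bear on the auxiliary quadratic, and the refined factor $\min\{r, v_p(\delta(\m))/2\}$ emerges instead of the crude exponent $r$ furnished by the degenerate valuation cases.
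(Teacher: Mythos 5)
Your reduction of the non-degenerate part of the sum to a Ramanujan sum attached to the quadratic $c_0t^2+c_1t+c_2$ and then to Lemma \ref{lem:rho} is exactly the paper's treatment of the contribution from $\b$ with $p\nmid L(\b)L'(\b)L''(\b)$, and your route to the $r=1$ asymptotic, though more computational than the paper's (which just uses $\rho_{q_\m}(p)=1+\chi_p(\delta(\m))$ in the Ramanujan-sum identity), is sound: I checked that $D^2(\kappa^2+4AC\mu)=\sigma(\m)^2-4\al'\be'' ACD=\delta(\m)$, so your main term does convert to $p^2\chi_p(-\delta(\m))$.

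The genuine gap is in the degenerate valuation classes for general $r$, which you bound only by the trivial $O(p^{3r})$ per class and then explicitly defer as ``obstacle (ii)''. This is not a technicality to be organized later: when $p\nmid\delta(\m)$ the lemma demands $O((r+1)p^{2r})$, so each degenerate class must be improved by a full factor $p^{r}$, and this improvement is where most of the work in the lemma lives. Concretely, two separate arguments are needed. First, for $p^{j}\|L(\b)$ with $1\le j<r$, one must observe that the two Gauss sums with leading coefficient $L(\b)$ vanish unless $p^{j}\mid(m_1,m_2)$, and that after restricting $b$ to the residue class $b\equiv-\bar\al\be\bmod{p^j}$ the surviving sum is again a Ramanujan sum attached to the quadratic $p^{-j}q_{\m}(b_0+p^{j}x)$, whose discriminant is \emph{still exactly} $\delta(\m)$ (the substitution multiplies the discriminant by $p^{2j}$ and the division by $p^{j}$ removes it), so that Lemma \ref{lem:rho} applies with the same exponent $v_p(\delta(\m))/2$. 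Second, and more seriously, for the class $p^{r}\mid L(\b)$ the $b$-sum collapses entirely and one is left with $p^{2r}c_{p^r}(\sigma(\m))$, where $\sigma(\m)$ is the form \eqref{eq:sigma}; here Lemma \ref{lem:rho} is of no use, and one needs the separate congruence argument that $p^{r}\mid\sigma(\m)$ together with the forced condition $p^{2r}\mid m_1^2+m_2^2$ implies $p^{2r}\mid\delta(\m)$ (via $\delta(\m)\equiv4\al'\be\xi_5\{\sigma(\m)+(\al'\be-\al\be')\xi_5-\al\be''\xi_3\}\bmod{p^{2r}}$). Your proposal never introduces $\sigma(\m)$ and contains no mechanism for this last case, so as it stands the claimed exponent $2r+\min\{r,v_p(\delta(\m))/2\}$ is not established.
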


Assuming this to be true for the moment we can  establish 
Lemmas \ref{lem:need1} and \ref{lem:need2}.	
Beginning with the latter, 
we  deduce
that 
$|\cD_{d'}^*(\m')|\leq
4^{\omega(d')}\tau(d')d'^{2} (d',\delta(\m'))$. 
Once inserted into
\eqref{eq:seek}, we obtain
$$
|\mathcal D_{d}(\ma{m})|
\leq 4^{\omega(d)}\tau(d)d^{2}
\sum_{\substack{h\mid (d,  \m)}} h^{2}
\left(\frac{d}{h}, \frac{\delta(\m)}{h^4}\right)
\leq 4^{\omega(d)}\tau(d)^2d^{2}
(d,\m)
(d, \delta(\m)),
$$
since $(d,\Delta_V)=1$. This therefore establishes Lemma \ref{lem:need2}.

We now turn to the proof of Lemma \ref{lem:need1}, 
and recall the definition of $\Sigma(x)$ in 
\eqref{eq:need1-pre}.
Suppose first that  $\delta(\m)\neq 0$. Then it follows from Lemma \ref{lem:need2} that 
$\cD_d(\m)\ll d^{2+\ve}$,
since  $\gcd(d,\delta(\m))=\gcd(d,\m)=1$. But then $\Sigma(x)\ll x^{3+\ve}$. 
If $\delta(\m)=0$ then Lemma \ref{lem:need2} yields the trivial bound
$$
\Sigma(x)\ll \sum_{d\leq x} d^{3+\ve} (d,\m)\ll |\m|^\ve x^{4+\ve},
$$
since $\m\neq \0$.

When $\delta(\m)H(\m)\neq 0$ we can do better using 
complex analysis. Recall the definition \eqref{eq:N_T2} of $N$.
Let $M$ be any non-zero integer divisible by $qN$. We will need to
examine the Dirichlet series 
$$
\eta_M(s;\m)=
\sum_{(d,M)=1} \frac{\chi(d)\cD_{d}(\m)}{d^s}=
\prod_{p\nmid M}\left\{\sum_{r=0}^\infty
\frac{\chi_{p^r}(-1)\cD_{p^r}(\m)}{p^{rs}}\right\},
$$
for $s \in \CC$.
One deduces from   \eqref{eq:seek} and Lemmas~\ref{lem:need2} and \ref{lem:mawkish} 
 that
\begin{align*}
\sum_{r=0}^\infty
\frac{\chi_{p^r}(-1)\cD_{p^r}(\m)}{p^{rs}}
&=
1+\frac{\chi_{p}(-1)\cD_{p}(\m)}{p^{s}}
+O\left(
\sum_{r=2}^\infty
4(r+1)^2p^{2r-r\sigma}\right)\\
&=
1+\frac{\chi_{p}(\delta(\m))}{p^{s-2}}
+O\left(p^{1-\sigma}+p^{4-2\sigma}\right),
\end{align*}
for the primes under consideration, 
where $\sigma=\Re(s)$. 
It  follows that 
$$\eta_M(s;\m)=L(s-2,\psi_\m)E_M(s),
$$ 
where $L(s,\psi_\m)$ is the Dirichlet $L$-function with Jacobi symbol $\psi_\m(\cdot) =(\frac{\delta(\m)}{\cdot})$ and
$E_M(s)$ is an Euler product which converges absolutely in the half
plane $\sigma>\frac{5}{2}$ and satisfies the bound $E_M(s)=O( M^\ve)$
there. 
One notes that $\psi_\m$ is non-trivial when $\delta(\m)\neq \square$ and 
has conductor $O(|\m|^4)$ since $\delta$ is a quartic form.

Invoking the truncated Perron formula, 
as in \cite[Eq.\ (4.5)]{BM} with $c=3+\ve$, we easily arrive at the statement
of Lemma \ref{lem:need1} when $\delta(\m)\neq \square$ by repeating the
proof of \cite[Lemma~18]{BM} and moving the line of integration back to
$\sigma=\frac{5}{2}+\ve$. 
When $\delta(\m)=\square$, we see that 
$\eta_M(s;\m)$ is absolutely convergent and bounded by $O(M^\ve)$ in the
half-plane $\sigma>3$.
We apply the  truncated Perron formula, 
as in \cite[Eq.\ (4.5)]{BM} , with $c=3+\ve$. But then we immediately  arrive at the second 
estimate in Lemma \ref{lem:need1} without
the need to move the line of integration.

\begin{proof}[Proof of Lemma \ref{lem:mawkish}]
In order to apply \eqref{eq:gauss} we  must deal
with the possibility that one of the linear forms $L_{i}(\b)$ is
divisible by $p$.  Let us write 
\begin{equation}\label{eq:sum_j}
\cD_{p^{r}}^*(\m) = \sum_{0\leq j<r} \cD_{p^{r}}^{(j)}(\m) +
\cD_{p^{r}}^{(r)}(\m),
\end{equation}
where for $0\leq j<r$ we denote by $\cD_{p^{r}}^{(j)}(\m) $ 
the overall contribution to $\cD_{p^r}^*(\m)$ from $\b$ such that  
$p^{j}\| g(\b)$. Likewise $\cD_{p^{r}}^{(r)}(\m) $ is the contribution from $\b$ such that
$p^{r}\mid g(\b)$. 
For the first part of the lemma it will suffice to show that 
\begin{equation}\label{eq:bang}
\cD_{p^{r}}^{(j)}(\m)
\leq 4p^{2r+\min\{r,\frac{v_p(\delta(\m))}{2}\}},
\end{equation}
for $0\leq j\leq r$.

We begin with an analysis of $\cD_{p^{r}}^{(0)}(\m)$, 
recalling the notation \eqref{eq:psy} for $\cD_{p^{r}}(\m;\b)$. 
Note that
$p\nmid b_{2}$ in $\cD_{p^{r}}(\m;\b)$. Applying \eqref{eq:gauss}, we deduce that
\begin{align*}
\cD_{p^{r}}(\m;\b) 
&= p^{3r}\chi_{p^{r}}
\left(-g(\b\right)^{2})e_{p^{r}}\left(-\overline{4}\left\{ \sum_{i=1}^{6} \overline{L_{i}(\b)} m_{i}^{2}\right\} \right)\\
&= p^{3r}\chi_{p^{r}}(-1) e_{p^{r}}\left(-\overline{4b_{2}}\left\{ \sum_{i=1}^{6} \overline{L_{i}
(b_{1}\overline{b_{2}},1)} m_{i}^{2}\right\} \right).
\end{align*}
Dividing by $p^{2r}$, 
introducing the sum over $\b$ and making the change of variables $b=b_{1}\overline{b_{2}}$, 
we obtain
\begin{equation}\label{eq:D0}
\cD_{p^{r}}^{(0)}(\m)=p^{r} \chi_{p^{r}}(-1) \sum_{\substack{b \bmod{p^{r}}\\ 
p\nmid g(b,1)}}
c_{p^{r}}\left(q_{\m}(b)\right),
\end{equation}
where
\begin{equation}\label{eq:quid}
\begin{split}
q_{\m}(b)
&=\be''L'(b,1)(m_1^2+m_2^2)+\be''L(b,1)(m_3^2+m_4^2)+
L(b,1)L'(b,1)(m_5^2+m_6^2)\\
&=c_0b^2+c_1b+c_2,
\end{split}
\end{equation}
in the notation of \eqref{eq:ci}. Recall from \eqref{eq:delta-given} that  $\delta(\m)=c_1^2-4c_0c_2$.
It follows from \eqref{eq:ram} and Lemma \ref{lem:rho} that 
$$
|\cD_{p^{r}}^{(0)}(\m)|
\leq p^{2r}  \left(\rho_{q_\m}(p^r)+\rho_{q_\m}(p^{r-1})\right)
\leq 4p^{2r+\min\{r,\frac{v_p(\delta(\m))}{2}\}}.
$$
This is satisfactory for \eqref{eq:bang}.

Next we require an estimate of similar strength for the sums 
$\cD_{p^{r}}^{(j)}(\m)$, for $1\leq j<r$.
We begin by noting that if $p\mid g(\b)$ in $\cD_{p^{r}}(\m;\b)$ then $p$ can divide 
precisely one of the linear factors since $p\nmid \Delta_V \b$.   Consequently we may write 
$\cD_{p^{r}}^{(j)}(\m)=D+D'+D''$ where $D$ denotes the overall  contribution arising from $\b$ such that 
$p^j\| L(\b)$, and similarly for $D'$ and $D''$. We will
 deal here only with the  sum $D$, which is typical.
Since $p\nmid L'(\b)L''(\b)$ we deduce that $p\nmid b_2$ and \eqref{eq:gauss} yields
$$
\cD_{p^r}(\m;\b) = p^{2r} 
e_{p^{r}}\left(-\overline{4b_2}\left\{ \sum_{i=3}^{6} \overline{L_{i}(b_1\bar{b_2},1)} m_{i}^{2}\right\} \right) \mathcal{G}_1\mathcal{G}_2,
$$
where
$$
\mathcal{G}_i=\sum_{k\bmod{p^r}} e_{p^r}(L(\b)k^2+m_i k),
$$
for $i=1,2.$   Write $L(b_1\bar{b_2},1)=p^j u$, with $p\nmid u$. It is easily checked that $\mathcal{G}_i=0$ unless $p^j\mid m_i$, for $i=1,2$. Writing $m_i=p^jm_i'$ for $i=1,2$, 
 it follows from a further application of \eqref{eq:gauss} that 
\begin{align*}
\mathcal{G}_i
&=p^j
\sum_{k\bmod{p^{r-j}}} e_{p^{r-j}}(b_2uk^2+m_i' k)\\
&=
\begin{cases}
p^{\frac{r+j}{2}} 
e_{p^{r-j}}(-\overline{4 b_2 u}m_{i}'^{2}), & \mbox{if $r-j$ is even,}\\
\ve(p) \chi_p(b_2u)
p^{\frac{r+j}{2}} 
e_{p^{r-j}}(-\overline{4b_2 u}m_{i}'^{2}), & \mbox{if $r-j$ is odd.}
\end{cases}
\end{align*}
Hence
$$
|D|\leq \frac{1}{p^{2r}} \cdot 
p^{3r+j} 
\left|
\sum_{\substack{\b \bmod{p^r}\\
L(\b)=p^j u\\
p\nmid b_2u
 }}
e_{p^{r}}\left(-\overline{4b_2}\left\{ \sum_{i=3}^{6} \overline{L_i(b_1\bar{b_2},1)} m_{i}^{2}\right\} \right) 
e_{p^{r-j}}\left(-\overline{4b_2u}(m_{1}'^{2}+m_2'^2)\right)\right|
$$
if $p^j\mid (m_1,m_2)$ and $D=0$ otherwise.  
We proceed under the assumption that $p^j\mid (m_1,m_2)$.
We may view the sum over $b_2$ as a Ramanujan sum, leading to the inequality
$$
|D|
\leq 
p^{r+j} 
\sum_{\substack{b \bmod{p^r}\\
L(b,1)\equiv 0 \bmod{p^j}
 }}
|c_{p^{r}}\left(r_{\m}(b)\right) |,
$$
where $r_{\m}(b)=p^{-j}q_{\m}(b)$, in the notation of \eqref{eq:quid}.

We handle the remaining sum by writing $b=b_0+p^jb'$ where
$b_0\equiv-\bar{\alpha}\beta\bmod{p^j}$ and $b'$ runs modulo $p^{r-j}$.  
Let 
\begin{align*}
r_\m'(x)=r_\m(b_0+p^jx)
&=p^{-j}q_\m(b_0+p^jx)\\
&=p^jc_0 x^2+(2b_0c_0+c_1)x+p^{-j}q_\m(b_0),
\end{align*}
and note that $r_\m'$ has discriminant $c_1^2-4c_0c_2=\delta(\m)$. 
We have 
$$
|D|
\leq 
p^{r+j} 
\sum_{b'\bmod{p^{r-j}}}
|c_{p^r}(r'_\m(b'))|.
$$
The summand is zero unless $p^{r-1}\mid r'_\m(b')$. In particular we may assume that $p^j\mid r'_{\m}(b')$, whence 
$|c_{p^r}(r'_\m(b'))|=p^j|c_{p^{r-j}}(p^{-j}r'_\m(b'))|$.
Hence it follows from 
 \eqref{eq:ram} and Lemma \ref{lem:rho} that
\begin{align*}
|D|
&\leq p^{2r+j} 
\left(\rho_{p^{-j}r_\m'}(p^{r-j})+\rho_{p^{-j}r_\m'}(p^{r-j-1})\right)
\leq 4p^{2r+\min\{r,\frac{v_p(\delta(\m))}{2}\}}.
\end{align*}
The same bound holds for $D',D''$ and so also for $\mathcal{D}_{p^r}^{(j)}(\m)$, as required for 
\eqref{eq:bang}.

Finally we consider $\cD_{p^r}^{(r)}(\m)$.  We adapt the preceding argument, dealing with the case $p^r\mid L(\b)$ and $p\nmid L'(\b)L''(\b)$, corresponding to $D$, say.
Tracing through the argument one deduces that 
$\mathcal{G}_1\mathcal{G}_2=0$ unless $p^r\mid (m_1,m_2)$, which we henceforth assume, in which case 
$\mathcal{G}_1\mathcal{G}_2=p^{2r}$.  Hence
\begin{align*}
D
&=p^{2r} 
\sum_{\substack{b\bmod{p^r}\\ L(b,1)\equiv 0 \bmod{p^r}}}
 c_{p^r}\left( \beta'' (m_3^2+m_4^2)+L'(b,1)(m_5^2+m_6^2)\right)\\
 &=p^{2r} 
 c_{p^r}\left( \alpha\beta'' (m_3^2+m_4^2)+(\alpha\beta'-\alpha'\beta)(m_5^2+m_6^2)\right)\\
 &=p^{2r} 
 c_{p^r}\left(\sigma(\m)\right),
\end{align*}
in the notation of \eqref{eq:sigma}.
We claim that 
$$
|c_{p^r}(\sigma(\m))|\leq p^{\min\{r,\frac{v_p(\delta(\m))}{2}\}}.
$$
Once achieved, and coupled with companion estimates for $D'$ and $D''$, this will
suffice for \eqref{eq:bang}.
Suppose first that $p^r\mid \sigma(\m)$, so that $c_{p^r}(\sigma(\m))=\phi(p^r)$. 
It will be convenient to write $\xi_{i}=m_i^2+m_{i+1}^2$, for $i\in \{1,3,5\}$.
Recalling that $p^{2r}\mid \xi_1$ and noting from 
\eqref{eq:ci} and \eqref{eq:sigma} that 
$\sigma(\m)=c_1-2\alpha'\beta \xi_5$, we see that 
\begin{align*}
\delta(\m)
&\equiv (\sigma(\m)
+2\alpha'\beta\xi_5)^2-4\alpha\alpha'\beta \xi_5
(\beta''\xi_3+\beta'\xi_5)
 \bmod{p^{2r}}\\
&\equiv 4\alpha'\beta\xi_5\left\{ \sigma(\m)
+(\alpha'\beta-\alpha\beta')\xi_5 -\alpha\beta''\xi_3\right\}
 \bmod{p^{2r}}\\
&\equiv 0 \bmod{p^{2r}}.
\end{align*}  
Hence $v_p(\delta(\m))\geq 2r$, as required. 
The case in which $p^{r-1}\| \sigma(\m)$ is  similar, since then 
$c_{p^r}(\sigma(\m))=-p^{r-1}$ and the same argument shows that $v_p(\delta(\m))\geq 2(r-1)$.

\medskip
It remains to analyse the case $r=1$ when $p\nmid H(\m)$. In this case
$$
\mathcal{D}_{p}^*(\m)=
\mathcal{D}_{p}^{(0)}(\m)+
\mathcal{D}_{p}^{(1)}(\m),
$$
by \eqref{eq:sum_j}.  Moreover, 
$\mathcal{D}_{p}^{(1)}(\m)=0$ since $p\nmid H(\m)$.
Taking $r=1$ in \eqref{eq:D0} we deduce from \eqref{eq:ram} that
$$
\cD_{p}^*(\m)=p \chi_{p}(-1) 
\left(
p
\sum_{\substack{b \bmod{p}\\ 
p\nmid g(b,1)\\
p\mid q_{\m}(b) }}1 -
\sum_{\substack{b \bmod{p}\\ 
p\nmid g(b,1)}}1
\right).
$$
The second sum in the brackets is $p+O(1)$ and the first sum can be written
$$
\rho_{q_{\m}}(p) - \#\{b\bmod{p}: p\mid g(b,1), ~p\mid q_{\m}(b)\}=\rho_{q_\m}(p),
$$
since $p\nmid H(\m)$.
Substituting the identity  $\rho_{q_\m}(p)=1+\chi_p(\delta(\m))$, we easily arrive at the statement of the lemma.
\end{proof}

\section{Analysis of $\mathcal{M}_{d,q}(\m)$}\label{s:last'}

In this section we establish the estimate in Lemma
\ref{lem:need3} for the mixed sum. Let $\m\in \ZZ^6$ and let $d,q \in \NN$ with $d\mid q^\infty$ and
$q\mid d^\infty$, such that $(d,\Delta_V)=1$.
We claim that 
\begin{equation}\label{eq:onion}
\cM_{d,q}(\m)
=\frac{q^6}{d^2}
\sum_{h\mid d}h^6
\sum_{\substack{r\mid q}}
 \frac{\mu(q/r)}{r^6}
\sum_{\substack{u\mid r\\ 
(uhq/r)\mid \m}}
u^6
 \sum_{\substack{b_1 \bmod{d'}\\b_2 \bmod{d'r'}\\
 (b_1,ub_2,d')=(b_2,r')=1 }}
\cD_{d'r'}\left(\m'';r'b_1,b_2\right),
\end{equation}
where 
$$
d'=d/h, \quad 
\m''=(uhq/r)^{-1}\ma{m}\in \ZZ^6, \quad r'=r/u.
$$
To see this we treat the sum over $a$ as a Ramanujan sum in \eqref{eq:S'}, finding that
\begin{align*}		
\cM_{d,q}(\m)
&=
\sum_{\substack{\mathbf k \bmod{dq}\\
Q_1(\k)\equiv 0\bmod{d}\\Q_2(\k)\equiv 0\bmod{d}}}
e_{dq}\left(\m.\k\right)
\sideset{}{^{*}}\sum_{a\bmod{q}}
e_{dq}\left(aQ_2(\k)\right)\\
&=
\sum_{r\mid q} r\mu\left(\frac{q}{r}\right)
\sum_{\substack{\mathbf k \bmod{dq}\\
Q_1(\k)\equiv 0\bmod{d}\\Q_2(\k)\equiv 0\bmod{dr}}}
e_{dq}\left(\m.\k\right).
\end{align*}
Breaking the inner sum into residue classes modulo $dr$ we easily deduce that $\m$ must  be divisible by $q/r$, whence 
$$
\cM_{d,q}(\m)
=\sum_{\substack{r\mid q\\ (q/r)\mid \m}} r\mu\left(\frac{q}{r}\right) \left(\frac{q}{r}\right)^6 
\sum_{\substack{\mathbf k \bmod{dr}\\
Q_1(\k)\equiv 0\bmod{d}\\Q_2(\k)\equiv 0\bmod{dr}}}
e_{dr}\left(\m'.\k\right),
$$
where $\m=(q/r)\m'$.
Using characters to detect the congruences involving $Q_1$ and $Q_2$, we may write the sum over $\k$ as
$$
\frac{1}{d^2r}
\sum_{b_1\bmod{d}}
\sum_{b_2\bmod{dr}}
\mathcal{D}_{dr}(\m';rb_1,b_2),
$$
in the notation of \eqref{eq:psy}.
Next we extract the greatest common divisor $h$ of $\b$ and $d$, writing $d=hd'$ and $\b=h\b'$, with $(\b',d')=1$.
Breaking the sum into congruence classes modulo $d'r$ we then see that 
\begin{align*}
\mathcal{D}_{dr}(\m';rb_1,b_2)
&=
\hspace{-0.1cm}
\sum_{\k' \bmod{d'r}} \sum_{\k'' \bmod{h}} 
\hspace{-0.2cm}
e_{d'r}\left(rb_1'Q_2(\k')+b_2'Q_2(\k')+h^{-1}\m'.\k'\right) e_h(\m'.\k'').
\end{align*}
This is 
$h^6 \mathcal{D}_{d'r}(h^{-1}\m';rb_1',b_2')$  if $h\mid \m'$ and 
$0$ otherwise.
Our work so far has shown that 
\begin{equation}\label{eq:train-2}
\cM_{d,q}(\m)
=\frac{q^6}{d^2}
\sum_{h\mid d}h^6
\sum_{\substack{r\mid q\\ 
(hq/r)\mid \m}}
 \frac{\mu(q/r)}{r^6}
 \sum_{\substack{b_1 \bmod{d'}\\b_2 \bmod{d'r}\\
 (\b,d')=1
 }}
\cD_{d'r}\left(h^{-1}\m';rb_1,b_2\right).
\end{equation}
To  complete the proof of \eqref{eq:onion}, we will need to 
extract the greatest  common divisor $u$ of  $b_2$ and
$r$, writing $b_2=ub_2'$ and $r=ur'$ with $(b_2',r')=1$. 
Breaking the sum in 
$\cD_{d'r}\left(\m';rb_1,b_2\right)
$ into residue classes modulo
$d'r'$, as before,
we conclude that the inner sum over $\b$ can be written
$$
\sum_{\substack{u\mid r\\ u \mid h^{-1}\m'}} u^6
 \sum_{\substack{b_1 \bmod{d'}\\b_2' \bmod{d'r'}\\
 (b_1,ub_2',d')=(b_2',r')=1 }}
\cD_{d'r'}\left(\m'';r'b_1,b_2'\right),
$$
where $\m''=u^{-1}\m'$. This concludes the proof of \eqref{eq:onion}.

Returning to \eqref{eq:onion}, we
denote by $D(d',r')$ the inner sum over $\b=(b_1,b_2)$. 
We will establish the following estimate

\begin{lemma}\label{lem:ghoul}
Let $d',r'\in \NN$, with $d'\mid {r'}^\infty$ and $(d',\Delta_V)=1$. 
Then we have 
$$
D(d',r')
\ll
d'^{4+\ve} {r'}^{3+\ve}
(d',\delta(\m'')) (r',Q_2^*(\m'')).
$$
\end{lemma}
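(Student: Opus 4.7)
The plan is to adapt the Gauss-sum analysis of Section \ref{s:last} to the mixed sum $D(d',r')$ by explicitly evaluating $\cD_{d'r'}(\m''; r'b_1, b_2)$ and then reducing the outer sum over $(b_1,b_2)$ to a Ramanujan-sum analysis in the spirit of \eqref{eq:D0}.

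By multiplicativity of $\cD_N(\m;\b)$ in the modulus $N$ and the Chinese Remainder Theorem applied to the sum over $(b_1,b_2)$, the problem factors over primes and reduces to the case $d' = p^a$, $r' = p^b$ with $p \nmid \Delta_V$; the hypothesis $d' \mid {r'}^\infty$ forces $b \geq 1$ whenever $a \geq 1$. Since $(b_2,p) = 1$ and $p \mid r'$, one has $L_i(r'b_1, b_2) \equiv \beta_i b_2 \pmod{p}$, which is a $p$-adic unit for each $i$, so \eqref{eq:gauss} applies to each of the six Gauss sums; their character factors multiply to $\pm 1$ since $\prod_{i=1}^6 L_i(\b') = g(\b')^2$ is a square of a unit modulo $p$. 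After factoring $b_2$ out via $L_i(r'b_1, b_2) = b_2\, L_i(b,1)$ with $b = r' b_1 \bar{b_2} \bmod{p^{a+b}}$ (necessarily a multiple of $p^b$), one obtains
$$\cD_{p^{a+b}}(\m''; r'b_1, b_2) = \pm p^{3(a+b)}\, e_{p^{a+b}}\!\bigl(-\overline{4 b_2\, g(b,1)}\, q_{\m''}(b)\bigr),$$
with $q_{\m''}$ the quadratic polynomial from \eqref{eq:quid}.

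I would then treat the sum over units $b_2 \bmod{p^{a+b}}$ as a Ramanujan sum by substituting $c = \bar{b_2}$, and reparametrise the $b_1$-sum through the bijection $\tilde b = p^{-b}\, b \bmod{p^a}$. Discarding the coprimality constraint on $b_1$ (which can only enlarge the sum) yields
$$|D(p^a, p^b)| \leq p^{3(a+b)} \sum_{\tilde b \bmod{p^a}} \bigl|c_{p^{a+b}}\!\bigl(\overline{4\, g(p^b \tilde b, 1)}\, q_{\m''}(p^b \tilde b)\bigr)\bigr|.$$
When $a = 0$ the $\tilde b$-sum collapses to a single term, and \eqref{eq:ram} applied to $q_{\m''}(0) = Q_2^*(\m'')$ delivers the factor $(p^b, Q_2^*(\m''))$. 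When $a \geq 1$, the identity $q_{\m''}(p^b \tilde b) \equiv Q_2^*(\m'') \pmod{p^b}$ shows that the Ramanujan requirement $p^{a+b-1} \mid q_{\m''}(p^b \tilde b)$ can only hold when $v_p(Q_2^*(\m'')) \geq b$, in which case the factor $(p^b, Q_2^*(\m'')) = p^b$ is automatic; otherwise the inner sum vanishes.

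In this remaining case one factors out $p^b$ and writes $B(x) = p^b c_0 x^2 + c_1 x + c_2/p^b$, whose discriminant is exactly $\delta(\m'')$. The Ramanujan condition becomes $p^{a-1} \mid B(\tilde b)$, so I would apply Lemma \ref{lem:rho} to bound both $\rho_B(p^{a-1})$ and $\rho_B(p^a)$ by $2 p^{v_p(\delta(\m''))/2}$. Combining these counts with the trivial bound $p^a$, and with the two Ramanujan values $\phi(p^{a+b})$ (for full divisibility) and $-p^{a+b-1}$ (for sharp divisibility), one arrives at $\sum_{\tilde b} |c_{p^{a+b}}(\cdots)| \ll p^{a+b+\min(a,\, v_p(\delta(\m'')))}$, which produces the factor $(p^a, \delta(\m''))$ and closes the local estimate. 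Multiplying over all primes $p \mid d'r'$ then yields the lemma. The main obstacle is precisely this two-regime bookkeeping: pairing the full Ramanujan bound $p^{a+b}$ na\"ively with the larger count $p\,\rho_B(p^{a-1})$ loses a stray factor of $p$, so one must keep the sharp-divisibility contribution separate and balance each term against the trivial bound $p^a$ according to the size of $v_p(\delta(\m''))$ relative to $a$.
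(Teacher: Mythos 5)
Your argument is correct and follows essentially the same route as the paper's: reduce to prime powers by multiplicativity, evaluate the six Gauss sums (all leading coefficients $L_i(r'b_1,b_2)$ being units since $p\mid r'$ and $p\nmid b_2\Delta_V$), convert the unit sum over $b_2$ into a Ramanujan sum in $q_{\m''}(p^{b}\tilde b)$, extract the divisibility $p^{b}\mid c_2=Q_2^*(\m'')$ from the nonvanishing condition on that Ramanujan sum, and control the remaining root count via Lemma \ref{lem:rho} applied to $p^{-b}q_{\m''}(p^{b}x)$, whose discriminant is $\delta(\m'')$ --- exactly the paper's chain of reductions. The only material divergence is that you dismiss the case of a prime with $a\geq 1$ but $b=0$ on the strength of the hypothesis $d'\mid r'^{\infty}$, whereas the paper treats $D(p^k,1)$ separately as a constrained version of $d'^{2}\cD_{d'}^{*}(\m'')$ bounded by the proof of Lemma \ref{lem:mawkish}; this extra case is in fact needed, because in the application through \eqref{eq:onion} one can have $r'=r/u=1$ while $d'=d/h>1$, so a complete treatment should include it even though the stated hypothesis formally excludes it.
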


Inserting this estimate into \eqref{eq:onion}, 
we see that the contribution to $\cM_{d,q}(\m)$ from $h\neq d$
is 
\begin{align*}
&\ll(dq)^\ve
\frac{q^6}{d^2}
\sum_{\substack{h\mid d\\ h\neq d}}h^5
\sum_{\substack{r\mid q}}
 \frac{1}{r^6}
\sum_{\substack{u\mid r\\ 
(uhq/r)\mid \m}}
u^5
\left(\frac{d}{h}\right)^{4} \left(\frac{r}{u}\right)^{3}
(d,\delta(\m)) (r,Q_2^*((q/r)^{-1}\m))\\
&\ll d^{2+\ve}q^{3+\ve}
\sum_{\substack{h\mid d\\ h\neq d}}h 
\sum_{\substack{r\mid q}}
 \left(\frac{q}{r}\right)^2
\sum_{\substack{u\mid r\\ 
(uhq/r)\mid \m}}u^2
(d,\delta(\m)) (q,Q_2^*(\m))\\
&\ll d^{2+\ve}q^{3+\ve}
 (d,\m)(q,\m)^2
(d,\delta(\m)) (q,Q_2^*(\m)).
\end{align*}
This is satisfactory for Lemma \ref{lem:need3}.

Turning to the contribution to 
$\cM_{d,q}(\m)$ from the terms with $h=d$, we see from \eqref{eq:train-2} that this is equal to
$$
d^4q^6
\sum_{\substack{r\mid q\\ 
(dq/r)\mid \m}}
\frac{ \mu(q/r)}{r^6}
\sum_{\substack{b_2 \bmod{r}}}
\cD_{r}\left(\m';0,b_2\right),
$$
with $\m'=(dq/r)^{-1}\ma{m}\in \ZZ^6$.
A little thought reveals that 
$$
\sum_{\substack{b_2 \bmod{r}}}
\cD_{r}\left(\m';0,b_2\right)=
\sum_{\substack{h\mid r\\ h\mid \m'}} h^6 \cQ_{r'}(\m''),
$$
with $r=hr'$ and $\m'=h\m''$.  But then it follows from \cite[Lemma
15]{BM} that 
\begin{align*}
\sum_{\substack{b_2 \bmod{r}}}
\cD_{r}\left(\m';0,b_2\right)
&\ll 
\sum_{\substack{h\mid r\\ h\mid \m'}} h^6 r'^{3} (r',Q_2^*(\m''))\\
&\ll r^{3+\ve} (r,\m')^2(r,Q_2^*(\m')).
\end{align*}
Since $d\mid \m$ and $\delta(\m)$ is homogeneous, we clearly have $d^2\leq (d,\m)(d,\delta(\m))$. Hence this case  contributes 
\begin{align*}
&\ll  
d^2 q^{3+\ve} 
(d,\m)
(q,\m)^2(d,\delta(\m)) (q,Q_2^*(\m))
\end{align*}
to $\cM_{d,q}(\m)$.
This too is satisfactory and so completes the proof of Lemma 
\ref{lem:need3}  subject  
to the verification of Lemma \ref{lem:ghoul}.

\begin{proof}[Proof of Lemma \ref{lem:ghoul}]
We recall that
$$
D(d',r')=
 \sum_{\substack{b_1 \bmod{d'}\\b_2 \bmod{d'r'}\\
 (b_1,ub_2,d')=(b_2,r')=1 }}
\cD_{d'r'}\left(\m'';r'b_1,b_2\right),
$$
This sum satisfies a basic multiplicativity 
property, meaning that it will suffice to analyse the case in which 
$d'=p^k$ and $r'=p^\ell$ for integers $k\geq 1$ and $\ell\geq 0$, with 
 $p\nmid \Delta_V$.
Let us write $u=p^j$. 

Suppose first that  $\ell=0$ and $k\geq 1$.  
Then 
$$
D(d',r')=
D(p^k,1)=
 \sum_{\substack{b_1 \bmod{d'}\\b_2 \bmod{d'}\\
 (b_1,ub_2,d')=1 }}
\cD_{d'}\left(\m'';b_1,b_2\right),
$$
with $d'=p^k$.
This is equal to a modified version of
$d'^2\cD_{d'}^*(\m'')$, in the notation of \eqref{eq:seek},
wherein one is only interested in $\b$ for which
$(b_1,ub_2,d')=1$. 
Obviously this precisely coincides with $d'^2\cD_{d'}^*(\m'')$ when $j=0$.
The proof of
Lemma \ref{lem:mawkish} therefore
 shows that 
\begin{equation}\label{eq:one}
|D(p^k,1)|\leq 
4(k+1)p^{4k+\min\{k,\frac{v_p(\delta(\m''))}{2}\}},
\end{equation}
where $\delta(\m'')$ is given by \eqref{eq:delta-given}.

Next suppose that $\ell\geq 1$ and $k\geq 1$. In this case
$$
D(d',r')=
D(p^k,p^\ell)=
 \sum_{\substack{b_1 \bmod{p^k}\\b_2 \bmod{p^{k+\ell}}\\
 p\nmid b_1b_2}}
\cD_{p^{k+\ell}}\left(\m'';p^\ell b_1,b_2\right).
$$
Recall
 the notation \eqref{eq:LL'L''} for $L,L',L''$ and the
subsequent definition of $g$.  It is clear that 
$p\nmid g(p^{\ell}b_1,b_2)$, since $p\nmid b_2$. We may now trace
through the analysis leading to \eqref{eq:D0}, finding that 
\begin{align*}
|D(p^k,p^\ell)|
&\leq  p^{3(k+\ell)}
\sum_{\substack{b \bmod{p^k}}}
\left|c_{p^{k+\ell}}\left(q_{\m''}(p^\ell b)\right)\right|\\
&\leq  p^{4(k+\ell)}\left(
\rho_{r_{\m''}}(p^{k})+
\rho_{r_{\m''}}(p^{k-1})\right),
\end{align*}
via \eqref{eq:ram}, 
where $r_{\m''}(x)=p^{-\ell} q_{\m''}(p^\ell x)$.
Here we have observed that the right hand side is empty unless $p^{\ell}\mid c_2=Q_2^*(\m'')$. But the discriminant of $r_{\m''}$ is equal to $\delta(\m'')$. Hence Lemma \ref{lem:rho}
yields
\begin{align*}
|D(p^k,p^\ell)|
&\leq  4p^{4(k+\ell)+\min\{k,\frac{v_p(\delta(\m''))}{2}\}}.
\end{align*}
Combining this with \eqref{eq:one},  we readily arrive at the statement
of Lemma \ref{lem:ghoul}.
\end{proof}

\end{document}